\numberwithin{equation}{section}
\newtheorem{theo}{Theorem} 
\newtheorem{lem}{Lemma}
\newtheorem{mcor}{Corollary}
\newtheorem{remark}{Remark}
\newcommand*{\tr}{\mathrm{tr}}
\begin{document}
\title{Superconnections and An Intrinsic Gauss-Bonnet-Chern Formula for Finsler Manifolds}
\author[Huitao Feng]{Huitao Feng$^1$}
\author[Ming Li]{Ming Li$^2$}

\address{Huitao Feng:  Chern Institute of Mathematics \& LPMC, Nankai University, Tianjin
300071, P. R. China}

\email{fht@nankai.edu.cn}

\address{Ming Li: Mathematical Science Research Center,
Chongqing University of Technology,
Chongqing 400054, P. R. China }

\email{mingli@cqut.edu.cn}

\thanks{$^1$~Partially supported by NSFC (Grant No. 11221091, 11271062, 11571184).}

\thanks{$^2$~Partially supported by NSFC (Grant No. 11501067, 11571184, 11871126) and the CIM Visiting Scholars Program}

\date{}  
\maketitle

\begin{center}
  \textit{Dedicated to the memory of Prof. S.S. Chern on the occasion of his 110th birthday}
\end{center}

\begin{abstract}
In this paper, we establish an intrinsic Gauss-Bonnet-Chern formula for Finsler manifolds by using the Mathai-Quillen's superconnection formalism, in which no extra vector field is involved. Furthermore, we prove a more general Lichnerowicz formula in this direction through a geometric localization procedure.
\end{abstract}

\tableofcontents

\section*{Introduction}

In the celebrated paper \cite{Chern1}, S. S. Chern presented a simple and intrinsic proof of the following famous Gauss-Bonnet-Chern formula (also GBC-formula in short) for a closed and oriented Riemannian manifold $(M,g^{TM})$ of dimension $2n$:
\begin{equation}\label{GBC riemannian}
  \chi(M)=\left(\frac{-1}{2\pi}\right)^n\int_M {\rm Pf}(R^{TM}),
\end{equation}
where the Pfaffian ${\rm Pf}(R^{TM})$ is a well-defined $2n$-form on $M$ constructed from the curvature $R^{TM}$ of the Levi-Civita connection $\nabla^{g^{TM}}$ associated to the Riemannian metric $g^{TM}$. With respect to any oriented orthonormal (local) frame $\{e_1,\ldots,e_{2n}\}$ for $TM$,
\begin{align}\label{pf}
{\rm Pf}(R^{TM})=\frac{1}{2^nn!}\sum_{a_1,\ldots,a_{2n}=1}^{2n}\epsilon_{a_1a_2\ldots a_{2n}}\Omega_{a_1}^{a_2}\wedge\cdots\wedge \Omega_{a_{2n-1}}^{a_{2n}},
\end{align}
where $\epsilon_{a_1 a_2\ldots a_{2n}}$ is the usual Kronecker symbol and
\begin{align}\label{Omega}
\Omega_{a}^{b}:=g^{TM}(R^{TM}e_a,e_b).
\end{align}
Chern's formula (\ref{GBC riemannian}) expresses the Euler characteristic $\chi(M)$ by the integration of the purely geometric differential form ${\rm Pf}(R^{TM})$ on $M$ and initials the geometric theory of characteristic classes--Chern-Weil theory, which plays a very important role in the study of modern geometry and topology. The key point in Chern's proof is his significant transgression formula
\begin{equation*}
  \left(\frac{-1}{2\pi}\right)^n\pi^*{\rm Pf}(R^{TM})=-d^{SM}\Pi
\end{equation*}
on the unit sphere bundle $\pi:SM\rightarrow M$, where the transgression form $\Pi$ lives on $SM$. For any vector field $X$ on $M$ with the isolated zero set $Z(X)$, let $[X]$ denote the normalizing of $X$ on $M\setminus Z(X)$. Also for any $\epsilon>0$, let $Z_\epsilon(X)$ denote
the $\epsilon$-neighborhood of the zero set $Z(X)$ in $M$, and set $M_\epsilon=M\setminus Z_\epsilon(X)$. By using the above transgression formula, Chern got the following equality over $M_\epsilon$,
\begin{align*}
 \left(\frac{-1}{2\pi}\right)^n{\rm Pf}(R^{TM})= \left(\frac{-1}{2\pi}\right)^n[X]^*\pi^*{\rm Pf}(R^{TM})=-[X]^*d^{SM}\Pi=-d^M[X]^*\Pi.
\end{align*}
With the help of the Poincar\'{e}-Hopf theorem and noticing that the tangent unit spheres of $M$ have the constant volume, Chern obtained his formula (\ref{GBC riemannian}) by computing the following integral
$$\left(\frac{-1}{2\pi}\right)^n\int_M{\rm Pf}(R^{TM})=\lim_{\epsilon\to 0}\int_{\partial Z_\epsilon(X)}[X]^*\Pi.$$

After Chern's work, many people try to generalize Chern's formula (\ref{GBC riemannian}) to the Finsler setting, among them we mention the papers \cite{BaoChern}, \cite{La}, \cite{Lich}, \cite{Shen1}, \cite{Shen2}, \cite{Zhao}.
Inspired by Chern's work, Lichnerowicz \cite{Lich} first established a GBC-formula for some special Finsler manifolds
by using the Cartan connection $\nabla^{\rm Car}$ on $\pi^*TM$. Realized that almost all Finsler geometric quantities live actually on the unit sphere bundle $SM$,
Lichnerowicz constructed an analogue differential $2n$-form ${\rm Pf}(R^{\rm Car})$ on $SM$ and proved the following transgression formula
\begin{align*}
\left(\frac{-1}{2\pi}\right)^{n}{\rm Pf}(R^{\rm Car})=-d^{SM}\Pi^{\rm Car}
\end{align*}
for some $(2n-1)$-form $\Pi^{\rm Car}$ on $SM$, where $R^{\rm Car}$ denotes the curvature of the Cartan connection on $\pi^*TM$. Following Chern's strategy, Lichnerowicz also proceeded the computations
$$\left(\frac{-1}{2\pi}\right)^{n}\int_M[X]^*{\rm Pf}(R^{\rm Car}):=\left(\frac{-1}{2\pi}\right)^{n}\lim_{\epsilon\to 0}\int_{M_\epsilon}[X]^*{\rm Pf}(R^{\rm Car})=\lim_{\epsilon\to 0}\int_{\partial Z_\epsilon(X)}[X]^*\Pi^{\rm Car}.$$
To get the desired Euler number $\chi(M)$ from the above computations, Lichnerowicz assumed that the space $(M,F)$ should be a Cartan-Berwald space and all Finsler unit spheres $S_xM=\{Y\in T_xM|F(Y)=1\}$ should have the same volume as a Euclidean unit sphere, and under these assumptions, he got the following formula:
\begin{align}\label{Licher}
\chi(M)=\left(\frac{-1}{2\pi}\right)^{n}\int_M[X]^*{\rm Pf}(R^{\rm Car}).
\end{align}
Note that the above volume assumption holds automatically for all Cartan-Berwald spaces of dimension larger than 2. Moreover, as mentioned by D. Bao and Z. Shen in \cite{BaoShen} (also in \cite{Shen1}), when the Finsler metric is reversible, then by a theorem of Brickell, any Cartan-Berwald space of  dimension larger than 2 must be Riemannian. 

Around fifty years later, Bao and Chern \cite{BaoChern} dropped the assumption of the Cartan-Berwald condition of Lichnerowicz by using the Chern connection $\nabla^{\rm Ch}$ proposed in \cite{Chern2} and established the following GBC-formula for all $2n$-dimensional oriented and closed Finsler manifolds with the constant volume of Finsler unit spheres:
\begin{equation}\label{BaoChern GBC}
  \left(\frac{-1}{2\pi}\right)^{n}\int_M[X]^*[{\rm Pf}(\widehat{R}^{\rm Ch})+\mathcal{F}]=\chi(M)\frac{{\rm Vol}({\rm Finsler}S^{2n-1})}{{\rm Vol}(S^{2n-1})}
\end{equation}
by proving the under transgression formula
$$\left(\frac{-1}{2\pi}\right)^{n}\left[{\rm Pf}(\widehat{R}^{\rm Ch})+\mathcal{F}\right]=-d^{SM}\Pi^{\rm Ch},$$
where $\widehat{R}^{\rm Ch}$ is the skew-symmetrization of the curvature $R^{\rm Ch}$ of the Chern connection $\nabla^{\rm Ch}$ with respect to the fundamental tensor $g_F$ of $F$,
${\rm Pf}(\widehat{R}^{\rm Ch})$ is the Pfaffian of $\widehat{R}^{\rm Ch}$, $\Pi^{\rm Ch}$ is the associated transgression form and
\begin{align}\label{F term}
  \mathcal{F}=(-1)^n\sum_{k=0}^{n-1}\frac{(-1)^{k+1}}{(2n-2k-1)!!k!2^k}\mathcal{F}_k
\end{align}
in which $\mathcal{F}_0=0$
and
\begin{align*}
  \mathcal{F}_k=k\epsilon_{\alpha_1\ldots\alpha_{2n-1}}&\left\{\Omega_{\alpha_1}^{\alpha_2}\wedge(\omega_{\alpha_1}^{\alpha_1}-\omega_{\alpha_2}^{\alpha_2})
  +\Omega_{\alpha_2}^{\alpha_2}\wedge(\omega_{\alpha_1}^{\alpha_2}+\omega^{\alpha_1}_{\alpha_2})+(k-1)\Omega_{\alpha_2}^{\alpha_3}\wedge(\omega_{\alpha_1}^{\alpha_3}+\omega^{\alpha_1}_{\alpha_3})\right.\\
  &\left.+(2n-2k-1)\left[\Omega_{\alpha_2}^{\alpha_{2k+1}}\wedge(\omega_{\alpha_1}^{\alpha_{2k+1}}+\omega^{\alpha_1}_{\alpha_{2k+1}})+1/k\Omega_{\alpha_1}^{\alpha_2}\wedge\omega_{\alpha_{2k+1}}^{\alpha_{2k+1}}\right]\right\}\\
  &\wedge\Omega_{\alpha_3}^{\alpha_4}\wedge\cdots\wedge\Omega_{\alpha_{2k-1}}^{\alpha_{2k}}\wedge\omega^{2n+\alpha_{2k+1}}\wedge\cdots\wedge\omega^{2n+\alpha_{2n-1}}.
\end{align*}
To avoid the constant volume assumption in Bao-Chern's formula (\ref{BaoChern GBC}), following Bao-Chern's approach, Lackey \cite{La} and Z. Shen \cite{Shen2} modified the GBC-integrand terms independently by using the unit sphere volume function $V(x)={\rm Vol}(S_xM)$ and obtained some new types of GBC-formulae via the Chern and Cartan connections respectively, for all oriented and closed Finsler manifolds.

However, a notable difference from Chern's formula (\ref{GBC riemannian}) for Riemannian manifolds, the above-mentioned generalizations in the Finsler setting had to make use of an extra vector field $X$ on $M$ in their GBC-integrands. As a result, all these GBC-formulae look not so  intrinsic in the spirit of Chern's original formula (\ref{GBC riemannian}). In \cite{ShenY}, Y. Shen asked explicitly whether there is a Gauss-Bonnet-Chern formula for general Finsler manifolds without using any vector fields.

In this paper, by using Mathai-Quillen's superconnection formalism, we obtain the following Gauss-Bonnet-Chern formula for Finsler manifolds.

\begin{theo}\label{main thm}
Let $(M,F)$ be a closed and oriented Finsler manifold of dimension $2n$. Let $R^{\rm Ch}=R+P$ be the curvature of the Chern connection $\nabla^{\rm Ch}$ on the pull-back bundle $\pi^*TM$ over $SM$. Then in the induced homogeneous coordinate charts $(x^i,y^i)$ on $SM$, one has
\begin{equation*}
 \chi(M)=\int_{M}e(TM,\nabla^{\rm Ch}),
\end{equation*}
where
\begin{equation}\label{GBC local}
\begin{split}
 e(TM,\nabla^{\rm Ch})=&{1\over{(2\pi)^{2n}(2n)!}}\left\{\sum^n_{k=1}(-1)^kC_{2n}^{2k}C_{2k-2}^{k-1}\int_{SM/M}\delta^{i_1\ldots i_{2n}}_{j_1\ldots j_{2n}}R_{i_1}^{j_1}\cdots  R_{i_k}^{j_k}P_{i_{k+1}}^{j_{k+1}}\cdots P_{i_{2n-k}}^{j_{2n-k}}\right.\\
 &\left.\cdot\Upsilon_{i_{2n-k+1}}^{j_{2n-k+1}}\cdots\Upsilon_{i_{2n-1}}^{j_{2n-1}}\Xi^{j_{2n}}_{i_{2n}}
+\int_{SM/M}\delta^{i_1\ldots i_{2n}}_{j_1\ldots j_{2n}}P_{i_{1}}^{j_{1}}\cdots P_{i_{2n-1}}^{j_{2n-1}}\varpi_{i_{2n}}^{j_{2n}}\right\},
\end{split}
\end{equation}
and $\varpi_i^j$, $R^j_i$, $P^j_i$, $\Upsilon_i^j$ and $\Xi_i^j$ defined by (\ref{varpi}), (\ref{RP}), (\ref{gamma xi}), respectively, are purely geometric data derived from the Finsler metric $F$ on $M$.
\end{theo}

On the other hand, combining a slight generalization of a lemma of Feng and Zhang and a geometric localization procedure, we obtain a precise form of Lichnerowicz's orginal GBC-formula (\ref{Licher}) associated to a vector field with the isolated zero set.
\begin{theo}\label{BC type}
Let $(M,F)$ be a closed and oriented Finsler manifold of dimension $2n$ with the constant volume of Finsler unit spheres.
One has
\begin{equation}\label{L type formula}
  \left(\frac{-1}{2\pi}\right)^{n}\int_M[X]^*\left[{\rm Pf}(R^{\rm Car})+d\mathcal{H}\right]=\chi(M)\frac{{\rm Vol}({\rm Finsler}S^{2n-1})}{{\rm Vol}(S^{2n-1})},
\end{equation}
where  $X$ is a vector field on $M$ with the isolated zero set, and
\begin{equation}\label{H-def}
  \begin{split}
      \mathcal{H}&:=\sum_{k=1}^{n-1}\frac{(-1)^{n+k}}{(2n-2k-1)!!2^k k!}\sum\epsilon_{a_1\ldots a_{2n-1}}{Q}_{a_1}^{a_2}\wedge\cdots\wedge{Q}_{a_{2k-1}}^{a_{2k}}\wedge\omega^{2n}_{a_{2k+1}}\wedge\cdots\wedge\omega^{2n}_{a_{2n-1}},
  \end{split}
  \end{equation}
$Q=-\frac{1}{4}\Theta\wedge\Theta$ and $\Theta=g_F^{-1}\left(\nabla^{\rm Ch}g_F\right)$ is the Bismut-Zhang form (cf. \cite{BZ}) in the Finsler setting or the Cartan endomorphism (cf. \cite{FL}).
\end{theo}

This paper is organized as follows. In Section 1, we first introduce briefly the Mathai-Quillen's superconnection formalism (cf. \cite{Q,MQ}) for reader's convenience, and then recall the Mathai-Quillen type formula of Feng and Zhang on the Euler characteristic (cf. \cite{FZ}) and give it a slight generalization for our purpose. In Section 2, we work out the main result Theorem \ref{main thm} in this paper. In Section 3, we investigate some special Finsler manifolds, such as Finsler surfaces and Berwald spaces. By the special curvature property of Berwald spaces, our Finslerian GBC-formula reduces to a simple and elegant form, and from which Chern's GBC-formula is deduced easily. In the last section, we prove Theorem \ref{BC type} by proceeding a geometric localization procedure.

\vskip 0.5cm

{\bf Acknowledgments} The authors thanks Professor Weiping Zhang for his generosity of sharing his ideas on this problem with us and the encouragement.

\section{Superconnections and the Euler characteristic}

We first review some basic definitions and notations on superspaces and superconnections (cf. \cite{Q, MQ} and also \cite{BGV}, \cite{Yu}, \cite{Zhang} for more details). Then we recall the Mathai-Quillen type formula of Feng and Zhang on the Euler characteristic in \cite{FZ} and prove a slight generalization of it.

\subsection{Superspaces and superconnections} A super vector space $E$ is a vector space with a $\mathbf{Z}_2$-grading $E=E_+\oplus E_-$. Let $\tau_E\in {\rm End}(E)$ such that $\tau_E|_{E_\pm}=\pm 1.$ Then for any $B\in {\rm End}(E)$, the supertrace
$\tr_s[B]$ is defined by
\begin{align}\label{bbb}
\tr_s[B]=\tr[\tau_E B].
\end{align}
An element $B$ in ${\rm End}(E)$ is even (resp. odd) if $B(E_\pm)\subset E_\pm$ (resp. $B(E_\pm)\subset E_\mp$) and the degree $|B|$ of $B$ is defined to be 0/1 if B is even/odd. The bracket operation in ${\rm End}(E)$ for a superspace $E$ always refers to  the superbracket
\begin{align*}
[B_1,B_2]=B_1B_2-(-1)^{|B_1||B_2|}B_2B_1,
\end{align*}
for any $B_1,B_2\in {\rm End}(E)$. One has
\begin{align}\label{bbbb}
\tr_s[B_1,B_2]=0.
\end{align}

As an example, for any vector space $V$ of dimension $m$, the exterior algebra $\Lambda^*(V^*)$ generated by $V$ is a superspace with the natural even/odd $\mathbf{Z}_2$-grading:
\begin{align}\label{su}
\Lambda^*(V^*)=\Lambda^{\rm even}(V^*)\oplus\Lambda^{\rm odd}(V^*).
\end{align}
For any $B\in {\rm End}(V)$, the lifting $B^\natural$ of $B$ is a derivative acting on $\Lambda^*(V^*)$, that is, $B^\natural$ is linear, and for any $k$ and $v^{*,1}, \cdots, v^{*,k}\in V^*$,
\begin{align}\label{lift}
B^\natural (v^{*,1}\wedge\cdots\wedge v^{*,k}):=\sum_lv^{*,1}\wedge\cdots\wedge(B^*v^{*,l})\wedge\cdots\wedge v^{*,k},
\end{align}
where $(B^*v^*)(v):=-v^*(Bv)$ for any $v\in V$ and $v^*\in V^*$.

Let $\{v_1,\cdots,v_m\}$ be any basis of $V$ and let $\{v^{*,1},\cdots,v^{*,m}\}$ be its dual basis for $V^*$. Set $Bv_i=B^j_iv_j$. Then
\begin{align}\label{DD}
B^\natural=-\sum_{i,j}B^j_iv^{*,i}\wedge i_{v_j},
\end{align}
where $i_v$ is the interior multiplication on $\Lambda^*(V^*)$ induced by $v\in V$.
One verifies easily that
\begin{equation}\label{DDD}
\begin{split}
&\tr_s[(v^{*,1}\wedge i_{v_1})\cdots (v^{*,m}\wedge i_{v_m})]=(-1)^m,\\
&\tr_s[v^{*,i_1}\wedge\cdots v^{*,i_k}\wedge i_{v_{j_1}}\cdots i_{v_{j_l}}]=0,
\end{split}
\end{equation}
for any $1\leq i_1<\cdots <i_k\leq m$ and $1\leq j_1<\cdots<j_l\leq m$ with $0\leq k+l<2m$.

Given a Euclidean metric $g^V$ on $V$. For any $v\in V$, let $v^*$ be the metric dual of $v$, and set
\begin{align}\label{hat c}
c(v)=v^*\wedge-i_v,\quad \hat c(v)=v^*\wedge+i_v.
\end{align}
The for any $u,v\in V$, one has
\begin{align}\label{hathat c}
&c(u)c(v)+c(v)c(u)=-2g^V(u,v),\\\nonumber
&\hat c(u)\hat c(v)+\hat c(v)\hat c(u)=2g^V(u,v),\\ \nonumber
&c(u)\hat c(v)=-\hat c(v)c(u).
\end{align}
Also from (\ref{DDD}) and (\ref{hat c}), one gets for any orthonormal basis $\{v_1,\cdots,v_m\}$ of $V$,
\begin{equation}\label{DDDDD}
\begin{split}
&\tr_s[\hat c(v_1)c(v_1)\cdots\hat c(v_m)c(v_m)]=2^m,\\
&\tr_s[\hat c(v_{i_1})\cdots\hat c(v_{i_k})c(v_{j_1})\cdots c(v_{j_l})]=0,
\end{split}
\end{equation}
for any $1\leq i_1<\cdots <i_k\leq m$ and $1\leq j_1<\cdots<j_l\leq m$ with $0\leq k+l<2m$.

A super vector bundle $E=E_+\oplus E_-$ over a smooth manifold $M$ is a vector bundle with fibres of super vector spaces. Let
$\Omega^*(M,E)=\Gamma(\Lambda^*(T^*M)\hat\otimes E)$, which is, in general, an infinite dimensional super vector space with the natural total ${\bf Z}_2$-grading. A superconnection ${\bf A}$ on $E$ is an odd-parity first-order differential operator
$${\bf A}:\Omega^\pm(M,E)\to\Omega^\mp(M,E),$$
which satisfies the following Leibniz rule: for any $\omega\in\Omega^k(M)$, $s\in\Omega^*(M,E)$,
\begin{align}\label{L}
{\bf A}(\omega\wedge s)=d\omega\wedge s+(-1)^k\omega\wedge {\bf A}s.
\end{align}
The following two simple identities are crucial in the Chern-Weil theory related to the Mathai-Quillen's superconnection formalism:
\begin{align}\label{bracket}
[{\bf A},{\bf A}^2]=0,\quad \tr_s[{\bf A},B]=d\tr_s[B]
\end{align}
for any superconnection ${\bf A}$ on $E$ and any $B\in\Omega^*(M,{\rm End}(E))$.

\subsection{A Mathai-Quillen type formula on the Euler characteristic} Recall that in Chern's GBC-formula (\ref{GBC riemannian}), Chern actually obtained a Chern-Weil geometric expression Pfaffian ${\rm Pf}(R^{TM})$ of the Euler class $e(M)$ of an oriented and closed manifold $M$ of dimension $2n$ by using a metric-preserving connection on the tangent bundle $\pi:TM\to M$. For any connection $\nabla^{\rm a}$ on $TM$, by applying the Mathai-Quillen's geometric construction of the Thom class (cf. \cite{MQ}) to the exterior algebra bundle $\pi^*\Lambda^*(T^*M)$, Feng and Zhang in \cite{FZ} constructed an integrable top-form on $TM$ from the connection $\nabla^{\rm a}$ and proved that the integral of this form over $TM$ to be the Euler number $\chi(M)$ of $M$.

Let $\nabla^{\rm a}$ be any connection on $TM$. Then it induces a connection $\nabla^{\Lambda^*(T^*M)}$ on the exterior algebra bundle $\Lambda^*(T^*M)$, which preserves the even/odd $\mathbf{Z}_2$-grading in $\Lambda^*(T^*M)$. Let ${\hat Y}$ denote
the tautological section of the pull-back bundle $\pi^*TM$:
\begin{align}\label{Y}
{\hat Y}(x,Y):=Y\in (\pi^*TM)|_{(x,Y)},
\end{align}
where $(x,Y)\in TM$ with $x\in M$ and $Y\in T_xM$. For any given Euclidean metric $g^{TM}$ on $TM$, let ${\hat Y}^*$ denote the dual of ${\hat Y}$ with respect to the pull-back metric $\pi^*g^{TM}$ on $\pi^*TM$. Then the Clifford action
$c(\hat Y)={\hat Y}^*\wedge-i_{\hat Y}$
acts on $\pi^*\Lambda^*(T^*M)$ and exchanges the even/odd grading in $\pi^*\Lambda^*(T^*M)$. Moreover,
\begin{align}\label{Y2}
c({\hat Y})^2=-|{\hat Y}|^2_{\pi^*g^{TM}}=-|Y|^2.
\end{align}

For any $T>0$, Feng and Zhang in \cite{FZ} used the superconnection
\begin{align}\label{s}
A_T=\pi^*\nabla^{\Lambda^*(T^*M)}+Tc({\hat Y})
\end{align}
on the bundle $\pi^*\Lambda^*(T^*M)$ and proved the following Mathai-Quillen type formula on the Euler number $\chi(M)$:
\begin{align}\label{FZ}
\chi(M)=\left(1\over{2\pi}\right)^{2n}\int_{TM}\tr_s[\exp(A_T^2)].
\end{align}
A key point in the formula (\ref{FZ}) is that the connection $\nabla^{\rm a}$ on $TM$ needn't preserve the metric $g^{TM}$ used to define the Clifford action $c(\hat Y)$.

For the purpose of this paper, we need to generalize the formula (\ref{FZ}) slightly. Actually, one can choose any connection $\nabla$ and any Euclidean metric $g$ on the pull-back bundle $\pi^*TM$ to define a superconnection on $\pi^*\Lambda^*(T^*M)\equiv\Lambda^*(\pi^*T^*M)$:
let $\nabla^{\Lambda^*(\pi^*T^*M)}$ denote the lifting of the connection $\nabla$ on $\Lambda^*(\pi^*T^*M)$; let ${\hat Y}^*_g$ denote the dual of $\hat Y$ with respect to the metric $g$ and set $c_g(\hat Y)={\hat Y}^*_g\wedge-i_{\hat Y}$; then for any $T>0$,
\begin{align}\label{AT}
{\tilde A}_T=\nabla^{\Lambda^*(\pi^*T^*M)}+Tc_g(\hat Y)
\end{align}
is also a superconnection on $\Lambda^*(\pi^*T^*M)$. Moreover, by using (\ref{FZ}) and a transgression argument, one can prove the following slight generalization of (\ref{FZ}) easily.

\begin{lem}\label{lll} Let $M$ be a closed and oriented manifold of dimension $2n$. Then for any connection $\nabla$ and any Euclidean metric $g$ on $\pi^*TM$, if the curvature $R=\nabla^2$ and the metric $g$ have polynomial growth along fibres of $TM$, then the following formula holds for any $T>0$:
\begin{align}\label{FZ1}
\chi(M)=\left(1\over{2\pi}\right)^{2n}\int_{TM}\tr_s[\exp({\tilde A}_T^2)].
\end{align}
\end{lem}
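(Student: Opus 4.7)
The plan is to reduce the generalized formula (\ref{FZ1}) to the original Feng-Zhang formula (\ref{FZ}) by applying a Chern-Simons type transgression argument to a homotopy of superconnections on $\pi^*\Lambda^*(T^*M)$.

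First I would fix a reference connection $\nabla^{TM}$ and a reference Euclidean metric $g^{TM}$ on $TM$ (on the base manifold $M$ itself) and let $A_T$ be the corresponding superconnection defined in (\ref{s}). By (\ref{FZ}),
$$
\chi(M)=\left(\tfrac{1}{2\pi}\right)^{2n}\int_{TM}\tr_s[\exp(A_T^2)],
$$
so it suffices to show that
$$
\int_{TM}\tr_s[\exp(\tilde A_T^2)]=\int_{TM}\tr_s[\exp(A_T^2)].
$$
Observe that $\tilde A_T-A_T$ is the sum of an $\mathrm{End}(\pi^*\Lambda^*(T^*M))$-valued $1$-form on $TM$ (coming from the difference of lifted connections) and the odd endomorphism $T(c_g(\hat Y)-c(\hat Y))$; hence it is odd, and
$$
\tilde A_{T,s}=(1-s)A_T+s\tilde A_T,\qquad s\in[0,1],
$$
is a genuine one-parameter family of superconnections on $\pi^*\Lambda^*(T^*M)$ joining $A_T$ at $s=0$ to $\tilde A_T$ at $s=1$.

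Next, the standard transgression identity (an immediate consequence of the two identities in (\ref{bracket})) yields
$$
\frac{d}{ds}\tr_s[\exp(\tilde A_{T,s}^2)]=d\,\tr_s\!\left[\tfrac{d\tilde A_{T,s}}{ds}\exp(\tilde A_{T,s}^2)\right],
$$
so that integration over $s\in[0,1]$ gives
$$
\tr_s[\exp(\tilde A_T^2)]-\tr_s[\exp(A_T^2)]=d\alpha, \qquad \alpha:=\int_0^1 \tr_s\!\left[\tfrac{d\tilde A_{T,s}}{ds}\exp(\tilde A_{T,s}^2)\right]ds.
$$
Integrating over $TM$ and invoking Stokes' theorem, the required equality reduces to the vanishing of the boundary contribution of $\alpha$ at fibrewise infinity. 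For this, one exploits the Gaussian decay produced by the Clifford part: the interpolated analogue of (\ref{Y2}) shows that the scalar part of $\tilde A_{T,s}^2$ equals $-T^2$ times a uniformly positive-definite quadratic form in $\hat Y$, while the remaining curvature terms are controlled, on the fibres of $TM$, by $\pi^*R^{TM}$ and by $R=\nabla^2$, both of which are bounded along fibres (the former trivially, the latter by hypothesis). The exponential thus splits into a fibrewise Gaussian factor with at most polynomially growing coefficients, uniformly in $s\in[0,1]$, which gives integrability of $\alpha$ on $TM$ and the vanishing of its radial flux at infinity.

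The main obstacle is precisely this last analytic point: carrying out a careful, $s$-uniform estimate showing that the Gaussian factor dominates the polynomial growth coming from the curvature and the linear term $\hat Y^*_{g_s}\wedge$, so that Stokes' theorem applies with no boundary at infinity. This is exactly why the hypothesis that $R=\nabla^2$ be bounded along fibres appears in the statement of the lemma.
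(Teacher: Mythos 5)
Your proposal is correct and follows essentially the same route as the paper: a linear homotopy of superconnections, the transgression identity from (\ref{bracket}), and a Stokes/fibre-integration argument justified by Gaussian decay along the fibres, which is exactly the argument the paper carries out (and the paper itself notes that quoting (\ref{FZ}) plus transgression suffices). The only difference is cosmetic: instead of citing (\ref{FZ}) for the base case, the paper re-derives it for the pulled-back Levi-Civita connection by an explicit fibrewise supertrace computation reducing to Chern's formula (\ref{GBC riemannian}), and it organizes the Stokes step as integration along the fibres followed by $\int_M d^M(\cdot)=0$ on the closed base.
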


\begin{proof} Here we would like to give a direct proof of (\ref{FZ1}).

We first check the formula (\ref{FZ1}) for $g=\pi^*g^{TM}$ and $\nabla=\pi^*\nabla^{g^{TM}}$, where $\nabla^{g^{TM}}$ is the Levi-Civita connection on $TM$ associated to the Riemannian metric $g^{TM}$ on $M$. Let $\nabla^{\Lambda^*(T^*M)}$ denote the lifting of $\nabla^{g^{TM}}$ on the exterior algebra bundle $\Lambda^*(T^*M)$ and let $R^{\Lambda^*(T^*M)}$ be its curvature. So the superconnection defined by (\ref{AT}) becomes
\begin{align}\label{at}
A_T=\pi^*\nabla^{\Lambda^*(T^*M)}+Tc(\hat Y),
\end{align}
and
\begin{align}\label{AA}
A_T^2&=\left(\pi^*\nabla^{\Lambda^*(T^*M)}+Tc(\hat{Y})\right)^2
=\pi^*R^{\Lambda^*(T^*M)}+T{c}\left(\pi^*\nabla^{g^{TM}} \hat{Y}\right)-T^2|Y|^2.
\end{align}
Let $\{e_1,\ldots,e_{2n}\}$ be a local oriented orthonormal frame for $TM$ and let $\{e^{*,1},\ldots,e^{*,2n}\}$ denote the dual frame for $T^*M$.
Then by (\ref{Omega}) and (\ref{hat c}), one has
\begin{align}\label{RR}
\pi^*R^{\Lambda^*(T^*M)}=-\sum_{a,b}(\pi^*\Omega^a_b)e^{*,b}\wedge i_{e_a}=-\frac{1}{4}\sum_{a,b}(\pi^*\Omega^a_b)(\hat{c}(e_b)+c(e_b))(\hat{c}(e_a)-c(e_a)).
\end{align}
We now compute $\tr_s\left[\exp(A_T^2)\right]$ fiberwisely. For the simplicity of computations, we choose a local oriented orthonormal frame field
$\{e_1,\ldots,e_{2n}\}$ around each $x\in M$ such that $(\nabla^{g^{TM}}e_a)(x)=0$, $a=1,\cdots,2n$. Moreover, we write $Y=\sum_ay^ae_a$ around $x$.
Then from (\ref{AA}), (\ref{RR}), (\ref{hathat c}), (\ref{DDDDD}), (\ref{pf}) and the degree counting of differential forms, we have
\begin{align*}
&\int_{T_xM}\tr_s\left[\exp(A_T^2)\right]=\int_{T_xM}e^{-T^2|Y|^2}\tr_s\left[\exp\left(\pi^*R^{\Lambda^*(T^*M)}+T{c}(\pi^*\nabla^{g^{TM}} \hat{Y})\right)\right]\\
&=\int_{T_xM}e^{-T^2|Y|^2}\tr_s\left[\exp\left(-\frac{1}{4}\pi^*\Omega^a_b(x)(\hat{c}(e_b)+c(e_b))(\hat{c}(e_a)-c(e_a))
+Tdy^a{c}(e_a)\right)\right]
\end{align*}
\begin{align*}
&=\int_{T_xM}e^{-T^2|Y|^2}\tr_s\left[\exp\left(-\frac{1}{4}\pi^*\Omega^a_b(x)\hat{c}(e_b)\hat{c}(e_a)\right)
\exp\left(Tdy^a{c}(e_a)\right)\right]\\
&=\int_{T_xM}\frac{(-1)^n}{2^{n}}e^{-T^2|Y|^2}\tr_s\left[\frac{1}{2^nn!}\left(\sum_{a,b}\pi^*\Omega^a_b(x)\hat{c}(e_b)\hat{c}(e_a)\right)^n
\prod_{a=1}^{2n}\left(1+Tdy^a{c}(e_a)\right)\right]\\\nonumber
&=\int_{T_xM}\frac{(-1)^nT^{2n}}{2^{n}}e^{-T^2|Y|^2}\tr_s\left[\pi^*{\rm Pf}(R^{TM})(x)\hat{c}(e_1)\cdots \hat{c}(e_{2n})\prod_{a=1}^{2n}dy^a{c}(e_a)\right]\\\nonumber
&=\left(\!-1\over 2\!\right)^n\!\!\!\!\int_{T_xM}\!\!T^{2n}e^{-T^2|Y|^2}\pi^*{\rm Pf}(R^{TM})(x)\!\wedge\! dy^1\!\!\wedge\!\cdots\!\wedge\! dy^{2n}\tr_s\left[\hat{c}(e_1)c(e_1)\cdots\hat{c}(e_{2n})c(e_{2n})\right]\\\nonumber
&=(-2)^n{\rm Pf}(R^{TM})(x)\int_{\mathbf{R}^{2n}}T^{2n}e^{-T^2\sum_a(y^a)^2}dy^1\wedge\cdots\wedge dy^{2n}
=(-2\pi)^n{\rm Pf}(R^{TM})(x).
\end{align*}
Therefore, by Chern's formula (\ref{GBC riemannian}), the formula (\ref{FZ1}) holds in the current case.

Now we prove the formula (\ref{FZ1}) for any metric $g$ and any connection $\nabla$ on the pull-back bundle $\pi^*TM$ under the assumption in Lemma \ref{lll}. For any $T>0$ and $t\in[0,1]$, set
\begin{align}\label{om}
\omega_T={\tilde A}_T-A_T=\nabla^{\Lambda^*(\pi^*T^*M)}-\pi^*\nabla^{\Lambda^*(T^*M)}+T(\hat{Y}_g^*-\hat{Y}^*)\wedge,
\end{align}
\begin{align}\label{Att}
A_{T,t}=t{\tilde A}_T+(1-t)A_T=\pi^*\nabla^{\Lambda^*(T^*M)}+t\omega_T+Tc(\hat{Y}),
\end{align}
where the superconnections ${\tilde A}_T$ and $A_T$ are defined by (\ref{AT}) and (\ref{at}), respectively. Moreover,
\begin{align}\label{Att1}
A_{T,t}^2=\pi^*R^{\Lambda^*(T^*M)}+\left([\pi^*\nabla^{\Lambda^*(T^*M)},t\omega_T+Tc(\hat{Y})]+tT[\omega_T,c(\hat{Y})]+t^2\omega_T^2\right)-T^2|Y|^2.
\end{align}
We have by using (\ref{bracket})
\begin{align*}
  &\frac{d}{dt}\tr_s\left[\exp(A_{T,t}^2)\right]=\tr_s\left[(\frac{d}{dt}A_{T,t}^2)\exp(A_{T,t}^2)\right]\\
=&\tr_s\left[\left[A_{T,t},\frac{d}{dt}A_{T,t}\right]\exp(A_{T,t}^2)\right]=\tr_s\left[\left[A_{T,t},\omega_T\exp(A_{T,t}^2)\right]\right] \\ =&d^{TM}\tr_s\left[\omega_T\exp(A_{T,t}^2)\right].
 \end{align*}
Therefore,
\begin{align*}
  &\tr_s\left[\exp({\tilde A}_T^2)\right]-\tr_s\left[\exp(A_T^2)\right]=\int_{0}^1\frac{d}{dt}\tr_s\left[\exp(A_{T,t}^2)\right]dt\\\nonumber
  &=\int_0^1d^{TM}\tr_s\left[\omega_T\exp(A_{T,t}^2)\right]dt=d^{TM}\int_0^1\tr_s\left[\omega_T\exp(A_{T,t}^2)\right]dt.
  \end{align*}
From (\ref{om}) and (\ref{Att}) and the assumption that the metric $g$ and the curvature $R=\nabla^2$ have polynomial growth along fibres of $TM$, one verifies easily from (\ref{Att1}) that $\tr_s\left[\omega_T\exp(A_{T,t}^2)\right]$ is exponentially decay along fibres of $\pi:TM\to M$,
and so $\int_{TM/M}\int_{0}^1\tr_s\left[\omega_T\exp(A_{T,t}^2)\right]dt$ is a well-defined differential form on $M$. Therefore, we have
\begin{align*}
&\int_{TM}\tr_s\left[\exp(\tilde{A}_T^2)\right]-\int_{TM}\tr_s\left[\exp(A_T^2)\right]
=\int_{TM}d^{TM}\int_0^1\tr_s\left[\omega_T\exp(A_{T,t}^2)\right]dt\\\nonumber
=&\int_M\int_{TM/M}d^{TM}\int_0^1\tr_s\left[\omega_T\exp(A_{T,t}^2)\right]dt
=\int_Md^M\int_{TM/M}\int_0^1\tr_s\left[\omega_T\exp(A_{T,t}^2)\right]dt\\\nonumber
=&0,
\end{align*}
and from which the lemma follows.
\end{proof}
Set
\begin{equation*}
  e(TM,\tilde{A}_T)=\left(\frac{1}{2\pi}\right)^{2n}\int_{TM/M}{\rm tr}_s\left[\exp\tilde{A}_T^2\right]\in\Omega^{2n}(M).
\end{equation*}
From Lemma \ref{lll}, $e(TM,\tilde{A}_T)$ gives rise to a geometric representation of the Euler class $e(TM)$ associated to the superconnection $\tilde{A}_T$.

\section{An intrinsic Gauss-Bonnet-Chern formula on Finsler manifolds}

In this section, we first give a brief review of some basic definitions and notations in Finsler geometry used in this paper (cf. \cite{BaoChernShen}, \cite{FL} for more details). Then starting from Lemma \ref{lll}, we work out a Finslerian Gauss-Bonnet-Chern formula in which no extra vector field is involved.

\subsection{A brief review of Finsler geometry} Let $M$ be a smooth manifold of dimension $m$. For any local coordinate chart $(U;(x^1,\ldots,x^{m}))$ on $M$, then $(\pi^{-1}(U);(x^1,\ldots,x^{m},y^1,\ldots,y^{m}))$ is an induced local coordinate chart on the total space $TM$ of the tangent bundle $\pi:TM\to M$. Let $O$ denote the zero section of $TM$ and set $TM_o=TM\setminus O$.

To distinguish elements in $\pi^*\Lambda^*(T^*M)$ and $\pi^*TM$ from $\Lambda^*(T^*M)$ and $TM$, we will decorate the elements in $\pi^*\Lambda^*(T^*M)$ and $\pi^*TM$ with a $\hat\cdot$ notation for clarity. We will also use the summation convention of Einstein in computations to simplify the notations.

A Finsler metric $F$ on $M$ is a positive smooth function on $TM_o$ satisfying that the positively homogeneity $F(x,\lambda Y)=\lambda F(x,Y)$ for any $\lambda>0$ and the induced fundamental tensor
\begin{equation*}
  g_F={1\over 2}[F^2]_{y^iy^j}d\hat{x}^i\otimes d\hat{x}^j
\end{equation*}
defines a Euclidean structure on the pull-back bundle $\pi^*TM\rightarrow TM_o$.
Set
\begin{equation*}
  G^i=\frac{1}{4}g^{il}\left\{[F^2]_{x^ky^l}y^k-[F^2]_{x^l}\right\},\quad N_j^i=\frac{\partial G^i}{\partial y^j},
\end{equation*}
and define
\begin{equation*}
\frac{\delta}{\delta x^i}=\frac{\partial }{\partial x^i}-N_i^j\frac{\partial}{\partial y^j},\quad \delta y^i=dy^i+N^i_jdx^j.
\end{equation*}

Let $\nabla^{\rm Ch}$ denote the Chern connection on the pull-back bundle $\pi^*TM\to TM_o$. With respect to the pull-back frame $\{\frac{\partial}{\partial \hat{x}^i}\}$ for $\pi^*TM$, set
\begin{align}\label{chern}
\nabla^{\rm Ch}\frac{\partial}{\partial \hat{x}^j}=\varpi_j^i\otimes \frac{\partial}{\partial \hat{x}^i}.
\end{align}
It is well-known that $\varpi:=(\varpi_j^i)$ is determined uniquely by the following structure equations:
\begin{equation}\label{structure equation}
  \begin{split}
    &0=dx^j\wedge\varpi_j^i;\\
    &dg_{ij}=g_{ik}\varpi_j^k+g_{jk}\varpi_i^k+2A_{ijk}\frac{\delta y^k}{F},
  \end{split}
\end{equation}
where $A_{ijk}=\frac{F}{4}[F^2]_{y^iy^jy^k}$ is the Cartan tensor.
The first and the second equation of (\ref{structure equation}) are often referred to as the torsion-free and the almost metric-preserving conditions of the Chern connection, respectively.
A direct consequence of the torsion free condition is that $\varpi_j^i$ are horizontal one forms on $TM_o$, which can be written as:
\begin{equation}\label{varpi}
  \varpi_j^i=\Gamma_{jk}^idx^k, \quad{\rm and}\quad \Gamma_{jk}^i=\Gamma_{kj}^i,
\end{equation}
where
\begin{equation}\label{Gamma}
  \Gamma_{jk}^i=\gamma_{jk}^i-g^{il}\left(A_{ljs}\frac{N_{k}^s}{F}+A_{lks}\frac{N_{j}^s}{F}-A_{jks}\frac{N_{l}^s}{F}\right),
\end{equation}
\begin{equation}\label{gamma}
  \gamma_{jk}^i=\frac{1}{2}g^{il}\left(\frac{\partial g_{jl}}{\partial x^k}+\frac{\partial g_{kl}}{\partial x^j}-\frac{\partial g_{jk}}{\partial x^l}\right).
\end{equation}
Furthermore, one has
\begin{equation}\label{N}
  N_j^i =y^k\Gamma_{jk}^i=\gamma_{jk}^iy^k-F^{-1}g^{il}A_{ljk}\gamma_{rs}^ky^ry^s.
\end{equation}
and then
\begin{equation}\label{delta y}
  \delta y^i=dy^i+y^k\varpi_k^i.
\end{equation}
One the other hand, the almost metric-preserving condition implies that
\begin{equation}\label{11111}
\nabla^{\rm Ch,*}({\hat Y}^*_{g_F})=(\nabla^{\rm Ch}\hat Y)^*_{g_F},
\end{equation}
where $\nabla^{\rm Ch,*}$ is the dual connection of the Chern connection on $\pi^*T^*M$.

Let $R^{\rm Ch}=(\nabla^{\rm Ch})^2$ be the curvature of the Chern connection $\nabla^{\rm Ch}$, which is an ${\rm End}(\pi^*TM)$-valued two-form on $TM_o$.
By the torsion-freeness of the Chern connection, the Chern curvature $R^{\rm Ch}$ is divided into two parts
\begin{align*}
 R^{\rm Ch}=R+P,
 \end{align*}
where $R$ is called the (h-h)-Chern curvature of $\nabla^{\rm Ch}$, which is an ${\rm End}(\pi^*TM)$-valued horizontal two-form on $TM_o$,
and $P$ is called the (h-v)-Chern curvature of $\nabla^{\rm Ch}$, which is an ${\rm End}(\pi^*TM)$-valued horizontal-vertical two-form on $TM_o$.
Set
\begin{align}\label{RP}
R\frac{\partial}{\partial\hat{x}^j}=R_j^i\otimes \frac{\partial}{\partial\hat{x}^i},\quad P\frac{\partial}{\partial\hat{x}^j}=P_j^i\otimes \frac{\partial}{\partial\hat{x}^i}.
\end{align}
By (\ref{varpi}), a direct computation shows that (cf. (3.2.2), (3.3.2) and (3.3.3) in \cite{BaoChernShen})
\begin{equation}\label{l}
\begin{split}
R_j^i&=\frac{1}{2}R_{j~kl}^{~i}dx^k\wedge dx^l=
\frac{1}{2}\left(\frac{\delta \Gamma_{jl}^i}{\delta x^k}-\frac{\delta \Gamma_{jk}^i}{\delta x^l}+\Gamma_{hk}^i\Gamma_{jl}^h-\Gamma_{hl}^i\Gamma_{jk}^h\right)dx^k\wedge dx^l\\
P_j^i&=P_{j~kl}^{~i}dx^k\wedge \frac{\delta y^l}{F}
     =-dx^k\wedge\left({{\partial\Gamma_{jk}^i}\over{\partial y^l}}\delta y^l\right).
\end{split}
\end{equation}

Sometimes, it is more convenient to use the special $g_F$-orthonormal local frame $\{e_1,\cdots,e_{2n}\}$ of $\pi^*TM$,
which is orthonormal with respect to $g_F$ and satisfies $e_{2n}=\widehat{Y}/F$.

Set
\begin{align}\label{B1}
\nabla^{\rm Ch}e_a=\omega_a^b\otimes e_b,\quad R^{\rm Ch}e_a=\Omega_a^b\otimes e_b=(R_a^b+P_a^b)\otimes e_b.
\end{align}
Set $\widehat{R}^{\rm Ch}$ be the skew-symmetrization of $R^{\rm Ch}$ with respect to $g_F$, then
\begin{equation}\label{B2}
  \widehat{R}^{\rm Ch}e_a=\sum_b{1\over 2}(\Omega^b_a-\Omega^a_b)\otimes e_b=:\widehat\Omega^b_a \otimes e_b
\end{equation}
and the Pfaffian ${\rm Pf}(\widehat{R}^{\rm Ch})$ of $\widehat{R}^{\rm Ch}$  satisfies
 \begin{equation*}\label{pf of Ch}
 \begin{split}
   {\rm Pf}(\widehat{R}^{\rm Ch})=&\frac{1}{2^nn!}\!\!\sum_{a_1,\ldots,a_{2n}=1}^{2n}\epsilon_{a_1 \ldots a_{2n}}\widehat{\Omega}_{a_1}^{a_2}\wedge\cdots\wedge
    \widehat{\Omega}_{a_{2n-1}}^{a_{2n}}\\
    =&\frac{1}{2^nn!}\!\!\sum_{a_1,\ldots,a_{2n}=1}^{2n}\epsilon_{a_1 \ldots a_{2n}}\Omega_{a_1}^{a_2}\wedge\cdots\wedge
    \Omega_{a_{2n-1}}^{a_{2n}}.
 \end{split}
 \end{equation*}

The Cartan connection $\nabla^{\rm Car}$ is proved to be the symmetrization of the Chern connection $\widehat{\nabla}^{\rm Ch}$ with respect to the Euclidean structure $g_F$ on $\pi^*TM$. By (\ref{structure equation}), the difference  between the Cartan connection and the Chern connection is given by
\begin{equation*}
  \frac{1}{2}\Theta=\widehat{\nabla}^{\rm Ch}-\nabla^{\rm Ch},
\end{equation*}
where $\Theta=g_F^{-1}\left(\nabla^{\rm Ch}g_F\right)$, i.e., $\Theta_i^j=(\nabla^{\rm Ch}g_F)_{ik}g^{kj}=2A_{ikl}g^{kj}\frac{\delta y^l}{F}$, is symmetric and called the Bismut-Zhang form or the Cartan endomorphism (cf. \cite{FL}). Set $\Theta e_a=\Theta_a^b e_b$. By the Euler lemma for homogenous functions, one has $\Theta^{2n}_a=0$, for $a=1,\ldots,2n$.

Similar to Proposition 4.5 in \cite{BZ}, we have the following lemma.
\begin{lem}
The curvatures of the Cartan connection and the Chern connection satisfy
\begin{equation}\label{R cartan}
  R^{\rm Car}=\widehat{R}^{\rm Ch}+Q,
\end{equation}
where $Q:=-\frac{1}{4}\Theta\wedge\Theta$.
\end{lem}
 Set $Qe_a:=Q_a^b\otimes e_b$.
By (\ref{B2}) and (\ref{R cartan}), the Pfaffian ${\rm Pf}(R^{\rm Car})$ is given by
 \begin{equation}\label{B3}
 \begin{split}
   &{\rm Pf}(R^{\rm Car})=\frac{1}{2^nn!}\!\!\sum_{a_1,\ldots,a_{2n}=1}^{2n}\epsilon_{a_1 \ldots a_{2n}}(\widehat{\Omega}_{a_1}^{a_2}+Q_{a_1}^{a_2})\wedge\cdots\wedge
    (\widehat{\Omega}_{a_{2n-1}}^{a_{2n}}+Q_{a_{2n-1}}^{a_{2n}})\\
    =&\frac{1}{2^nn!}\sum_{k=0}^{n-1}\frac{n!}{k!(n-k)!}\sum_{a_1,\ldots, a_{2n}=1}^{2n}\epsilon_{a_1\ldots a_{2n}}{Q}_{a_1}^{a_2}\wedge\cdots\wedge{Q}_{a_{2k-1}}^{a_{2k}}\wedge {\Omega}_{a_{2k+1}}^{a_{2k+2}}\wedge\cdots\wedge {\Omega}_{a_{2n-1}}^{a_{2n}}.
 \end{split}
 \end{equation}

Noticed that the Finsler metric $F$ on $TM_o$ is homogeneous of degree one, all the geometric data, such as $\nabla^{\rm Ch}$, $R^{\rm Ch}$, $R$ and $P$, can be reduced naturally onto $SM$. In this paper, we will use the same notations to denote their reductions on $SM$.

\subsection{A Finslerian Gauss-Bonnet-Chern formula} Set for any $r>0$,
\begin{align*}
DM(r):=\{Y\in TM|F(Y)\leq r\}.
\end{align*}
Let $\rho$ be a non-negative smooth function on $TM$ with $0\leq\rho\leq1$ and
$\rho(Y)\equiv 1$ for $F(Y)\leq {1/4}$ and $\rho(Y)\equiv 0$ for $F(Y)\geq {1/2}$.

For any connection $\nabla^{\rm a}$ on $\pi:TM\rightarrow M$, we get an extension of the Chern connection:
\begin{align}\label{tilde}
\widetilde{\nabla}_\rho=(1-\rho)\nabla^{\rm Ch}+\rho\pi^*\nabla^{\rm a}
\end{align}
on $\pi^*TM$ over the total space $TM$. Clearly, the curvature $(\widetilde{\nabla}_\rho)^2$ is bounded along the fibres of $TM$. Let $\widetilde{\nabla}^{\Lambda^*(\pi^*T^*M)}_\rho$ denote
the induced connection on $\Lambda^*(\pi^*T^*M)$ of $\widetilde{\nabla}_\rho$.

Let ${\tilde g}_F$ be any Euclidean metric on $\pi^*TM\to TM$ with ${\tilde g}_F=g_F$ over $TM\setminus  DM(1/2)$. We first prove the following lemma  by Lemma \ref{lll} for the metric ${\tilde g}_F$ and the connection $\widetilde\nabla_\rho$ over $TM$.

\begin{lem}\label{main thm with clifford}
Let $(M,F)$ be a closed and oriented Finsler manifold  of dimension $2n$. Then for any connection $\nabla^{\rm a}$ on $\pi:TM\to M$, one has

\begin{equation*}
 \chi(M)=\int_{M}e(TM,\nabla^{\rm Ch}),
\end{equation*}
where
\begin{equation}\label{GBC clifford}
\begin{split}
 e(TM,\nabla^{\rm Ch})=&{1\over{(2\pi)^{2n}(2n)!}}\left\{\sum^n_{k=1}C_{2n}^{2k}\int_{SM/M}
\tr_s\left[c({\bf e})c(\nabla^{\rm Ch}{\bf e})^{2k-1}(R^{\natural})^k(P^\natural)^{2n-2k}\right]\right.\\
 &\left.+\int_{SM/M}\tr_s\left[\theta^\natural\left(P^\natural\right)^{2n-1}\right]\right\},
\end{split}
\end{equation}
${\bf e}={\hat Y}|_{SM}$ and $R^\natural$,
$P^\natural$, $\theta^\natural$ are the natural lifting of $R$, $P$, $\theta$ respectively on $\Lambda^*(\pi^*T^*M)$,
and $\theta$ is defined by $\theta:=\nabla^{\rm Ch}-\pi^*\nabla^{\rm a}$.
  \end{lem}

\begin{proof} For any $T>0$, similar to (\ref{AT}), we define the following superconnection
\begin{align}\label{tildeA}
{\tilde  A}_{\rho,T}=\widetilde{\nabla}^{\Lambda^*(\pi^*T^*M)}_\rho+Tc_{{\tilde g}_F}(\hat Y).
\end{align}
Noticed that the curvature $(\widetilde{\nabla}_\rho)^2$ is bounded along the fibres of $TM$, we get by (\ref{FZ1})
\begin{align}\label{0}
\chi(M)=\left(1\over{2\pi}\right)^{2n}\int_{TM}\tr_s[\exp({\tilde A}_{\rho,T}^2)].
\end{align}
Since $\exp(\tilde{A}_{\rho,T}^2)$ is exponentially decay along fibres of $TM$ and $\chi(M)$ does not depend on $T>0$, we get
\begin{align}\label{p}
  \chi(M)=\lim_{T\rightarrow +\infty}\left(\frac{1}{2\pi}\right)^{2n}\int_{TM}\tr_s\left[\exp(\tilde{A}_{\rho,T}^2)\right]=\lim_{T\rightarrow +\infty}\left(\frac{1}{2\pi}\right)^{2n}\int_{DM(1)}\tr_s\left[\exp(\tilde{A}_{\rho,T}^2)\right].
\end{align}
Note that for any connection $\nabla^{\rm a}$ on $TM$, the curvature $(\pi^*\nabla^{\Lambda^*(T^*M)})^2$ of the connection $\pi^*\nabla^{\Lambda^*(T^*M)}$ involves no vertical differential forms on $TM$, where $\nabla^{\Lambda^*(T^*M)}$ is the lifting of $\nabla^{\rm a}$ on $\Lambda^*(T^*M)$, we have
\begin{align*}
\int_{DM(1)}\tr_s\left[\exp((\pi^*\nabla^{\Lambda^*(T^*M)})^2)\right]=0.
\end{align*}
Therefore, we obtain
\begin{equation}\label{trick}
\begin{split}
 &\int_{DM(1)}\!\!\tr_s\left[\exp(\tilde{A}_{\rho,T}^2)\right]
 \!=\!\int_{DM(1)}\!\!\left(\tr_s\left[\exp(\tilde{A}_{\rho,T}^2)\right]
 -\tr_s\left[\exp((\widetilde{\nabla}^{\Lambda^*(\pi^*T^*M)}_\rho)^2)\right]\right)\\
&+\int_{DM(1)}\left(\tr_s\left[\exp((\widetilde{\nabla}^{\Lambda^*(\pi^*T^*M)}_\rho)^2)\right]
-\tr_s\left[\exp((\pi^*\nabla^{\Lambda^*(T^*M)})^2)\right]\right).
\end{split}
\end{equation}

For the first term on the right hand side of (\ref{trick}), we have
\begin{align}\label{first term}
&\int_{DM(1)}\left(\tr_s\left[\exp(\tilde{A}_{\rho,T}^2)\right]
-\tr_s\left[\exp((\widetilde{\nabla}^{\Lambda^*(\pi^*T^*M)}_\rho)^2)\right]\right)\\\nonumber
=&\int_{DM(1)}\int_0^1\frac{\partial}{\partial t}\tr_s\left[\exp((\widetilde{\nabla}^{\Lambda^*(\pi^*T^*M)}_\rho+tTc_{{\tilde g}_F}(\hat Y))^2)\right]dt\\\nonumber
=&\int_{DM(1)}\int_0^1d^{TM}\tr_s\left[Tc_{{\tilde g}_F}(\hat Y)\exp((\widetilde{\nabla}^{\Lambda^*(\pi^*T^*M)}_\rho+tTc_{{\tilde g}_F}(\hat Y))^2)\right]dt\\\nonumber
=&\int_{DM(1)}d^{TM}\int_0^1\tr_s\left[Tc_{{\tilde g}_F}(\hat Y)\exp((\widetilde{\nabla}^{\Lambda^*(\pi^*T^*M)}_\rho+tTc_{{\tilde g}_F}(\hat Y))^2)\right]dt\\\nonumber
=&\int_{SM}i^*\int_0^1\tr_s\left[Tc_{{\tilde g}_F}(\hat Y)\exp((\widetilde{\nabla}^{\Lambda^*(\pi^*T^*M)}_\rho+tTc_{{\tilde g}_F}(\hat Y))^2)\right]dt\\\nonumber
=&\int_{SM}\int_0^1\tr_s\left[Tc({\bf e})\exp((\nabla^{\rm Ch,\natural}+tTc({\bf e}))^2)\right]dt\\\nonumber
=&\int_{SM}\int_0^1e^{-t^2T^2}\tr_s\left[Tc({\bf e})\exp(R^{{\rm Ch},\natural}+tT\left[\nabla^{\rm Ch,\natural},c({\bf e})\right])\right]dt\\\nonumber
=&\int_{SM}\int_0^1e^{-t^2T^2}\tr_s\left[Tc({\bf e})\exp\left(R^{{\rm Ch},\natural}
 +tTc(\nabla^{\rm Ch}{\bf e})\right)\right]dt,\nonumber
\end{align}
where $i:SM\hookrightarrow TM$ denotes the natural embedding of the unit sphere bundle $SM$ into $TM$, and $\nabla^{\rm Ch,\natural}$ is the lifting of $\nabla^{\rm Ch}$ on $\Lambda^*(\pi^*T^*M)$ and $R^{{\rm Ch},\natural}=(\nabla^{\rm Ch,\natural})^2=R^\natural+P^\natural$, and the last equation in (\ref{first term}) comes from (\ref{11111}).

Now noticed that the term $R^{{\rm Ch},\natural}$ is a two-form with two Clifford elements and the term $c(\nabla^{\rm Ch}{\bf e})$ is a one-form with one Clifford element, hence by the property (\ref{DDD}) or (\ref{DDDDD}) of the supertrace and a degree counting, we get from (\ref{first term}) that
\begin{align}\label{trick1}
&\lim_{T\to+\infty}\int_{DM(1)}\left(\tr_s\left[\exp(\tilde{A}_{\rho,T}^2)\right]
-\tr_s\left[\exp((\widetilde{\nabla}^{\Lambda^*(\pi^*T^*M)}_\rho)^2)\right]\right)\\\nonumber
&=\lim_{T\to+\infty}\int_{SM}\int_0^1e^{-t^2T^2}\tr_s\left[Tc({\bf e})\exp\left(R^{{\rm Ch},\natural}
 +tTc(\nabla^{\rm Ch}{\bf e})\right)\right]dt,\\ \nonumber
&=\lim_{T\to+\infty}\int_{SM}\int_0^1e^{-t^2T^2}\tr_s\left[Tc({\bf e})\exp(tTc(\nabla^{\rm Ch}{\bf e}))\exp(R^{{\rm Ch},\natural})\right]dt,\\ \nonumber
&=\sum^{n}_{k=1}{1\over{(2k-1)!}}\int_{SM}\tr_s\left[c({\bf e})c(\nabla^{\rm Ch}{\bf e})^{2k-1}\exp(R^{{\rm Ch},\natural}
 )\right]\cdot\lim_{T\to+\infty}\int_0^1e^{-t^2T^2}T^{2k}t^{2k-1}dt,\\ \nonumber
&=\sum^{n}_{k=1}{{(k-1)!}\over{2(2k-1)!k!(2n-2k)!}}\int_{SM}\tr_s\left[c({\bf e})c(\nabla^{\rm Ch}{\bf e})^{2k-1}(R^\natural)^k(P^\natural)^{2n-2k}
\right]\\ \nonumber
&=\sum^{n}_{k=1}{{C^{2k}_{2n}}\over{(2n)!}}\int_{SM}\tr_s\left[c({\bf e})c(\nabla^{\rm Ch}{\bf e})^{2k-1}(R^\natural)^k(P^\natural)^{2n-2k}
\right]. \nonumber
 \end{align}

For the second term on the right hand side of (\ref{trick}), by setting
$$\theta_\rho=\widetilde\nabla_\rho-\pi^*\nabla^{\rm a},$$
we have
\begin{align}\label{second term 1}
  &\int_{DM(1)}\left(\tr_s\left[\exp((\widetilde{\nabla}^{\Lambda^*(\pi^*T^*M)}_\rho)^2)\right]
-\tr_s\left[\exp((\pi^*\nabla^{\Lambda^*(T^*M)})^2)\right]\right)\\ \nonumber
=&\int_{DM(1)}\int_0^1\frac{\partial}{\partial t}\tr_s\left[\exp\left((\widetilde{\nabla}^{\Lambda^*(\pi^*T^*M)}_\rho-(1-t)\theta_\rho^\natural)^2\right)\right]dt\\\nonumber
=&\int_{DM(1)}d^{TM}\int_0^1\tr_s\left[\theta_\rho^\natural\exp\left((\widetilde{\nabla}^{\Lambda^*(\pi^*T^*M)}_\rho-(1-t)\theta_\rho^\natural)^2\right)\right]dt\\ \nonumber
=&\int_{SM}i^*\int_0^1\tr_s\left[\theta_\rho^\natural\exp\left((\widetilde{\nabla}^{\Lambda^*(\pi^*T^*M)}_\rho-(1-t)\theta_\rho^\natural)^2\right)\right]dt\\ \nonumber
=&\int_{SM}\int_{0}^1\tr_s\left[\theta^\natural\exp\left(R^{\rm Ch,\natural}-(1-t)[\nabla^{\rm Ch,\natural},\theta^\natural]+(1-t)^2\theta^\natural\wedge\theta^\natural\right)\right]dt.
 \end{align}
Note that
 \begin{equation*}
\begin{split}
  [\nabla^{\rm Ch,\natural},\theta^\natural]=&[\nabla^{\rm Ch,\natural}-\pi^*\nabla^{\Lambda^*(T^*M)},\theta^\natural]
  +[\pi^*\nabla^{\Lambda^*(T^*M)},\nabla^{\rm Ch,\natural}-\pi^*\nabla^{\Lambda^*(T^*M)}]\\
  =&[\theta^\natural,\theta^\natural]-[\pi^*\nabla^{\Lambda^*(T^*M)},\pi^*\nabla^{\Lambda^*(T^*M)}]
    +[\pi^*\nabla^{\Lambda^*(T^*M)},\nabla^{\rm Ch,\natural}]\\
  =&2\theta^\natural\wedge\theta^\natural-2\left(\pi^*\nabla^{\Lambda^*(T^*M)}\right)^2
  +[\pi^*\nabla^{\Lambda^*(T^*M)}-\nabla^{\rm Ch,\natural},\nabla^{\rm Ch,\natural}]+[\nabla^{\rm Ch,\natural},\nabla^{\rm Ch,\natural}]\\
   =&2\theta^\natural\wedge\theta^\natural-2\left(\pi^*\nabla^{\Lambda^*(T^*M)}\right)^2
  -[\nabla^{\rm Ch,\natural},\theta^\natural]+2R^{\rm Ch,\natural},
\end{split}
\end{equation*}
and so
\begin{equation}\label{chern omega brackt}
\begin{split}
  [\nabla^{\rm Ch,\natural},\theta^\natural]=\theta^\natural\wedge\theta^\natural-\left(\pi^*\nabla^{\Lambda^*(T^*M)}\right)^2
  +R^{\rm Ch,\natural}.
\end{split}
\end{equation}
Combining (\ref{second term 1}) and (\ref{chern omega brackt}), we get
\begin{align}\label{second term 2}
  &\int_{DM(1)}\left(\tr_s\left[\exp((\widetilde{\nabla}^{\Lambda^*(\pi^*T^*M)}_\rho)^2)\right]
-\tr_s\left[\exp((\pi^*\nabla^{\Lambda^*(T^*M)})^2)\right]\right)\\ \nonumber
=\!&\int_{SM}\int_{0}^1\!\!\tr_s\!\left[\theta^\natural\exp\left(R^{\rm Ch,\natural}\!+\!(1-t)\left((\pi^*\nabla^{\Lambda^*(T^*M)})^2-R^{\rm Ch,\natural}-\theta^\natural\wedge\theta^\natural\right)\right.\right.\\ \nonumber
&\left.\left.+(1-t)^2\theta^\natural\wedge\theta^\natural\right)\right]dt\\ \nonumber
=\!&\int_{SM}\int_{0}^1\!\!\tr_s\!\left[\theta^\natural\exp\left(tR^{\rm Ch,\natural}\!+\!(1-t)(\pi^*\nabla^{\Lambda^*(T^*M)})^2-t(1-t)\theta^\natural\wedge\theta^\natural\right)\right]dt\\ \nonumber
=\!&\int_{SM}\int_{0}^1\!\!\tr_s\!\left[\theta^\natural\exp\left(tP^\natural\!+\!tR^\natural\!+\!(1-t)(\pi^*\nabla^{\Lambda^*(T^*M)})^2
-t(1-t)\theta^\natural\wedge\theta^\natural\right)\right]dt.
\end{align}
By (\ref{varpi}), the term $\theta^\natural$ is an ${\rm End}(\Lambda^*(\pi^*T^*M))$-valued horizontal one form, and so
$$tR^\natural+(1-t)(\pi^*\nabla^{\Lambda^*(T^*M)})^2-t(1-t)\theta^\natural\wedge\theta^\natural$$
is an ${\rm End}(\Lambda^*(\pi^*T^*M))$-valued horizontal two form.
Hence from (\ref{second term 2}), we get
\begin{align}\label{second term 3}
  &\int_{DM(1)}\left(\tr_s\left[\exp((\widetilde{\nabla}^{\Lambda^*(\pi^*T^*M)}_\rho)^2)\right]
-\tr_s\left[\exp((\pi^*\nabla^{\Lambda^*(T^*M)})^2)\right]\right)\\ \nonumber
=&\int_{SM}\int_{0}^1\tr_s\left[\theta^\natural\exp\left(tP^\natural\right)\right]dt\\\nonumber
 =&\int_{SM}\frac{1}{(2n-1)!}\int_{0}^1t^{2n-1}dt~\tr_s\left[\theta^\natural(P^\natural)^{2n-1}\right]\\ \nonumber
 =&\frac{1}{(2n)!}\int_{SM}\tr_s\left[\theta^\natural\left(P^{\natural}\right)^{2n-1}\right].
\end{align}
By (\ref{p}), (\ref{trick}), (\ref{trick1}) and (\ref{second term 3}), we complete the proof of Lemma \ref{main thm with clifford}.
\end{proof}

\begin{remark}
The Chern connection is essential to get the formula (\ref{GBC clifford}), in which the first term follows from the almost metric-preserving property, while the second term from the torsion-freeness.
\end{remark}

In the following, by using the induced homogeneous coordinate charts $(x^i,y^i)$ on $SM$, we will prove Theorem \ref{main thm} by working out a local version of the formula (\ref{GBC clifford}). More precisely, we will give an explicit GBC-integrand on $M$ through the integration along the fibers, in which no information of the pull-back connection $\pi^*\nabla^{\rm a}$ are involved.

\begin{proof}[Proof of Theorem \ref{main thm}]
We first compute the term $\tr_s\left[c({\bf e})c(\nabla^{\rm Ch}\mathbf{e})^{2k-1}(R^\natural)^k(P^\natural)^{2n-2k}\right]$, for $k=1,\ldots,n$.

From (\ref{chern}), (\ref{RP}) and (\ref{DD}), with respect to the pull-back frame $\{\frac{\partial}{\partial\hat x^j}\}$ of $\pi^*TM$,  we have
\begin{align}\label{LR}
\varpi^\natural=-\varpi_i^jd\hat x^i\wedge i_{\frac{\partial}{\partial\hat x^j}},\quad R^\natural=-R_i^jd\hat x^i\wedge i_{\frac{\partial}{\partial\hat x^j}},\quad P^\natural=-P_i^jd\hat x^i\wedge i_{\frac{\partial}{\partial\hat x^j}}.
\end{align}
We have also
 \begin{align}\label{e}
 c({\bf e})=\omega\wedge-i_{{\bf e}}=F_{y^j}d\hat x^j\wedge-{{y^i}\over F}i_{\frac{\partial}{\partial\hat x^i}},
 \end{align}
where $\omega={\bf e}^*=F_{y^i}dx^i$ is the Hilbert form on $SM$.
Denote that
\begin{align}\label{nabla l}
\left(\nabla^{\rm Ch}{\bf e}\right)^i:=\frac{\delta y^i}{F}-\frac{y^i}{F}d\log F,\quad
(\nabla^{\rm Ch, *}\omega)_i:=g_{ik}\left(\frac{\delta y^k}{F}-\frac{y^k}{F}d\log F\right),
\end{align}
\begin{align}\label{gamma xi}
&\Upsilon_i^j:=(\nabla^{\rm Ch, *}\omega)_i\left(\nabla^{\rm Ch}{\bf e}\right)^j, \quad
\Xi_i^j:=\left(F_{y^{i}}(\nabla^{\rm Ch}{\bf e})^{j}-{{y^{j}}\over F}(\nabla^{\rm Ch, *}\omega)_{i}
\right).
\end{align}
By (\ref{11111}), we have
 \begin{equation}\label{nabla e}
 \begin{split}
c\left(\nabla^{\rm Ch}{\bf e}\right)=&(\nabla^{\rm Ch}{\bf e})^*\wedge-i_{\nabla^{\rm Ch}{\bf e}}=(\nabla^{\rm Ch, *}\omega)\wedge-i_{\nabla^{\rm Ch}{\bf e}}\\
=&(\nabla^{\rm Ch, *}\omega)_jd\hat x^j\wedge-(\nabla^{\rm Ch}{\bf e})^ii_{\frac{\partial}{\partial\hat x^i}}.
\end{split}
\end{equation}
From (\ref{LR})--(\ref{nabla e}), we get
\begin{align}\label{G1}
&\tr_s\left[c({\bf e})c(\nabla^{\rm Ch}{\bf e})^{2k-1}(R^\natural)^k(P^\natural)^{2n-2k}\right]
=\tr_s\left[(R^\natural)^k(P^\natural)^{2n-2k}c(\nabla^{\rm Ch}{\bf e})^{2k-1}c({\bf e})\right]\\\nonumber
=&\tr_s\left[\left(-R_i^jd\hat x^i\wedge i_{\frac{\partial}{\partial\hat x^j}}\right)^k
             \left(-P_i^jd\hat x^i\wedge i_{\frac{\partial}{\partial\hat x^j}}\right)^{2n-2k}
             \left((\nabla^{\rm Ch, *}\omega)_jd\hat x^j\wedge-(\nabla^{\rm Ch}{\bf e})^ii_{\frac{\partial}{\partial\hat x^i}}\right)^{2k-2}\right.\\\nonumber
             &\left.\left((\nabla^{\rm Ch, *}\omega)_pd\hat x^p\wedge-(\nabla^{\rm Ch}{\bf e})^qi_{\frac{\partial}{\partial\hat x^q}}\right)\left(F_{y^l}d\hat x^l\wedge-{{y^r}\over F}i_{\frac{\partial}{\partial\hat x^r}}\right)\right]\\\nonumber
=&(-1)^{k}\tr_s\left[\left(R_{i_1}^{j_1}\cdots R_{i_k}^{j_k}d\hat x^{i_1}\wedge i_{\frac{\partial}{\partial\hat x^{j_1}}}\cdots d\hat x^{i_k}\wedge i_{\frac{\partial}{\partial\hat x^{j_k}}}\right)
                       \left(P_{s_1}^{t_1}\cdots P_{s_{2n-2k}}^{t_{2n-2k}}d\hat x^{s_1}\wedge i_{\frac{\partial}{\partial\hat x^{t_1}}}\right.\right.\\\nonumber
                       &\left.\cdots d\hat x^{s_{2n-2k}}\wedge i_{\frac{\partial}{\partial\hat x^{t_{2n-2k}}}}\right)
                       \cdot C_{2k-2}^{k-1}\left((\nabla^{\rm Ch,*}\omega)_{p_1}(\nabla^{\rm Ch}{\bf e})^{q_1}\cdots(\nabla^{\rm Ch,*}\omega)_{p_{k-1}}(\nabla^{\rm Ch}{\bf e})^{q_{k-1}}\right.\\\nonumber
  &\cdot\left.\left.d\hat x^{p_1}\wedge i_{\frac{\partial}{\partial\hat x^{q_1}}}\cdots d\hat x^{p_{k-1}}\wedge i_{\frac{\partial}{\partial\hat x^{q_{k-1}}}}\right)
\left(F_{y^l}(\nabla^{\rm Ch}{\bf e})^q d\hat x^l\wedge i_{\frac{\partial}{\partial\hat x^q}}-{{y^r}\over F}(\nabla^{\rm Ch, *}\omega)_p d\hat x^p\wedge i_{\frac{\partial}{\partial\hat x^r}}\right) \right]\\\nonumber
=&(-1)^{k}C_{2k-2}^{k-1}\tr_s\left[\left(R_{i_1}^{j_1}\cdots R_{i_k}^{j_k}d\hat x^{i_1}\wedge i_{\frac{\partial}{\partial\hat x^{j_1}}}\cdots d\hat x^{i_k}\wedge i_{\frac{\partial}{\partial\hat x^{j_k}}}\right)
                       \!\!\left(P_{s_1}^{t_1}\cdots P_{s_{2n-2k}}^{t_{2n-2k}}d\hat x^{s_1}\wedge i_{\frac{\partial}{\partial\hat x^{t_1}}}\right.\right.\\\nonumber
                       &\left.\cdots d\hat x^{s_{2n-2k}}\wedge i_{\frac{\partial}{\partial\hat x^{t_{2n-2k}}}}\right)
                        \!\!\left(\Upsilon_{p_1}^{q_1}\cdots\Upsilon_{p_{k-1}}^{q_{k-1}}d\hat x^{p_1}\wedge i_{\frac{\partial}{\partial\hat x^{q_1}}}\cdots d\hat x^{p_{k-1}}\wedge i_{\frac{\partial}{\partial\hat x^{q_{k-1}}}}\right)
                        \!\!\left(\Xi^r_ld\hat x^l\wedge i_{\frac{\partial}{\partial\hat x^r}}\right)\\\nonumber
                       =&(-1)^{k}C_{2k-2}^{k-1}\tr_s\left[R_{i_1}^{j_1}\cdots R_{i_k}^{j_k}P_{s_1}^{t_1}\cdots P_{s_{2n-2k}}^{t_{2n-2k}}\Upsilon_{p_1}^{q_1}\cdots\Upsilon_{p_{k-1}}^{q_{k-1}}\Xi^r_l
d\hat x^{i_1}\wedge i_{\frac{\partial}{\partial\hat x^{j_1}}}\cdots d\hat x^{i_k}\wedge i_{\frac{\partial}{\partial\hat x^{j_k}}}\right.\\\nonumber
&\left.d\hat x^{s_1}\wedge i_{\frac{\partial}{\partial\hat x^{t_1}}}\cdots d\hat x^{s_{2n-2k}}\wedge i_{\frac{\partial}{\partial\hat x^{t_{2n-2k}}}}
d\hat x^{p_1}\wedge i_{\frac{\partial}{\partial\hat x^{q_1}}}\cdots d\hat x^{p_{k-1}}\wedge i_{\frac{\partial}{\partial\hat x^{q_{k-1}}}}
d\hat x^l\wedge i_{\frac{\partial}{\partial\hat x^r}}\right]\\\nonumber
=&(-1)^{k}C_{2k-2}^{k-1}R_{i_1}^{j_1}\cdots R_{i_k}^{j_k}P_{i_{k+1}}^{j_{k+1}}\cdots P_{i_{2n-k}}^{j_{2n-k}}\Upsilon_{i_{2n-k+1}}^{j_{2n-k+1}}\cdots\Upsilon_{i_{2n-1}}^{j_{2n-1}}\Xi^{j_{2n}}_{i_{2n}}\\\nonumber
&\tr_s\left[d\hat x^{i_1}\wedge i_{\frac{\partial}{\partial\hat x^{j_1}}}\cdots d\hat x^{i_{2n}}\wedge  i_{\frac{\partial}{\partial\hat x^{j_{2n}}}}\right]\\\nonumber
=&(-1)^{k}C_{2k-2}^{k-1}\delta^{i_1\ldots i_{2n}}_{j_1\ldots j_{2n}}R_{i_1}^{j_1}\cdots  R_{i_k}^{j_k}P_{i_{k+1}}^{j_{k+1}}\cdots P_{i_{2n-k}}^{j_{2n-k}}\Upsilon_{i_{2n-k+1}}^{j_{2n-k+1}}\cdots\Upsilon_{i_{2n-1}}^{j_{2n-1}}\Xi^{j_{2n}}_{i_{2n}}.\nonumber
\end{align}

Now we compute the second term $\tr_s\left[\theta^\natural\left(P^\natural\right)^{2n-1}\right]$ in (\ref{GBC clifford}).
Recall that $\theta$ is defined by $\theta=\nabla^{\rm Ch}-\pi^*\nabla^{\rm a}$ in Lemma \ref{main thm with clifford}. Let $\vartheta=(\vartheta^j_i)$ be the connection matrix of $\nabla^{\rm a}$ with respect to the frame $\{{\partial\over{\partial\hat x^i}}\}$. Then we get
\begin{align*}
\theta^\natural=-\left(\varpi_i^j-\pi^*\vartheta_i^j\right)d\hat x^i\wedge i_{\frac{\partial}{\partial\hat x^j}}.
\end{align*}
Using (\ref{DDD}), we get
\begin{align}\label{Theta1}
  &\tr_s\left[\theta^\natural\left(P^\natural\right)^{2n-1}\right]
  =\tr_s\left[-\left(\varpi_i^j-\pi^*\vartheta_i^j\right)d\hat x^i\wedge i_{\frac{\partial}{\partial\hat x^j}}\left(-P_k^ld\hat x^k\wedge i_{\frac{\partial}{\partial\hat x^l}}\right)^{2n-1}\right]\\ \nonumber
  =&P_{i_{1}}^{j_{1}}\cdots P_{i_{2n-1}}^{j_{2n-1}}\left(\varpi_{i_{2n}}^{j_{2n}}-\pi^*\vartheta_{i_{2n}}^{j_{2n}}\right)\tr_s\left[d\hat x^{i_{1}}\wedge i_{\frac{\partial}{\partial\hat x^{j_1}}}\cdots d\hat x^{i_{2n}}\wedge i_{\frac{\partial}{\partial\hat x^{j_{2n}}}}\right]\\ \nonumber
=&\delta^{i_1\ldots i_{2n}}_{j_1\ldots j_{2n}}P_{i_{1}}^{j_{1}}\cdots P_{i_{2n-1}}^{j_{2n-1}}\left(\varpi_{i_{2n}}^{j_{2n}}-\pi^*\vartheta_{i_{2n}}^{j_{2n}}\right).
\end{align}
Furthermore, by using (\ref{l}), we get
\begin{align}\label{imp}
\int_{SM/M}P_{i_{1}}^{j_{1}}\cdots P_{i_{2n-1}}^{j_{2n-1}}(\pi^*\vartheta_{i_{2n}}^{j_{2n}})
=\vartheta_{i_{2n}}^{j_{2n}}\int_{SM/M}P_{i_{1}}^{j_{1}}\cdots P_{i_{2n-1}}^{j_{2n-1}}=0.
\end{align}
So combining (\ref{Theta1}) and (\ref{imp}), we have
\begin{align}\label{G}
\int_{SM/M}\tr_s\left[\theta^\natural\left(P^\natural\right)^{2n-1}\right]=
\int_{SM/M}\delta^{i_1\ldots i_{2n}}_{j_1\ldots j_{2n}}P_{i_{1}}^{j_{1}}\cdots P_{i_{2n-1}}^{j_{2n-1}}\varpi_{i_{2n}}^{j_{2n}}.
\end{align}
Finally, from Lemma \ref{main thm with clifford}, (\ref{G1}) and (\ref{G}), we finish the proof.
\end{proof}

\begin{remark} Note that the terms
$$\delta^{i_1\ldots i_{2n}}_{j_1\ldots j_{2n}}R_{i_1}^{j_1}\cdots  R_{i_k}^{j_k}P_{i_{k+1}}^{j_{k+1}}\cdots P_{i_{2n-k}}^{j_{2n-k}}\Upsilon_{i_{2n-k+1}}^{j_{2n-k+1}}\cdots\Upsilon_{i_{2n-1}}^{j_{2n-1}}\Xi^{j_{2n}}_{i_{2n}}$$
are globally defined differential forms
on $SM$, while the term $\delta^{i_1\ldots i_{2n}}_{j_1\ldots j_{2n}}P_{i_{1}}^{j_{1}}\cdots P_{i_{2n-1}}^{j_{2n-1}}\varpi_{i_{2n}}^{j_{2n}}$ is not. However, the vertical exactness property (\ref{l}) of the (h-v)-Chern curvature $P$ guarantees that the following integral along fibres
$$\int_{SM/M}\delta^{i_1\ldots i_{2n}}_{j_1\ldots j_{2n}}P_{i_{1}}^{j_{1}}\cdots P_{i_{2n-1}}^{j_{2n-1}}\varpi_{i_{2n}}^{j_{2n}}$$
is a well-defined global differential form on $M$.
\end{remark}

\section{Some special Finsler spaces}
In this section, we will investigate the GBC-formulae for some special Finsler spaces.

\subsection{Finsler surfaces}

As an example of Theorem \ref{main thm}, we will give an explicit GBC-formula for a closed and oriented Finsler surface $(M,F)$. Note that in the induced homogeneous coordinate charts $(x^i,y^i)$ on $SM$, we get from (\ref{GBC local})
\begin{align}\label{21}
  \chi(M)=\frac{1}{8\pi^2}\left\{-\int_{SM}\delta_{j_1j_2}^{i_1i_2}R_{i_1}^{j_1}\Xi_{i_2}^{j_2}+
  \int_M\int_{SM/M}\delta^{i_1i_{2}}_{j_1 j_{2}}P_{i_{1}}^{j_{1}}\varpi_{i_{2}}^{j_{2}}\right\}.
\end{align}
For further investigation, it is more convenience to rewrite (\ref{21}) with respect to the following special $g_F$-orthonormal oriented frame $\{e_1,e_2\}$, where
\begin{align*}
  e_1:=\frac{F_{y^2}}{\sqrt{g}}\frac{\partial}{\partial \hat{x}^1}-\frac{F_{y^1}}{\sqrt{g}}\frac{\partial}{\partial \hat{x}^2},\quad\quad  e_2:=\frac{y^1}{F}\frac{\partial}{\partial \hat{x}^1}+\frac{y^2}{F}\frac{\partial}{\partial \hat{x}^2}.
\end{align*}
In this case, the dual frame $\{\omega^1,\omega^2\}$ is given by
\begin{align*}
  \omega^1=\frac{\sqrt{g}}{F}y^2 d\hat{x}^1-\frac{\sqrt{g}}{F}y^1 d\hat{x}^2, \quad\quad\omega^2=F_{y^1}d\hat{x}^1+F_{y^2}d\hat{x}^2.
\end{align*}
Set
\begin{align*}
  \omega^3:=\omega_2^1=-\omega_1^2=\frac{\sqrt{g}}{F}\left(y^2\frac{\delta y^1}{F}-y^1\frac{\delta y^2}{F}\right).
\end{align*}
Then under the special $g_F$-orthonormal frame above, the Chern curvature forms are
\begin{align*}
  (R^{\rm Ch})_a^b=R_a^b+P_a^b=R_{a~12}^{~b}\omega^1\omega^2+P_{a~11}^{~b}\omega^1\omega^3+P_{a~21}^{~b}\omega^2\omega^3.
\end{align*}
By (\ref{21}), one easily verifies the following corollary .
\begin{mcor}\label{c1}
For any closed and oriented Finsler surfaces $(M,F)$, we have
\begin{equation}\label{gbc surface}
\begin{split}
   &\chi(M)=\frac{1}{(2\pi)^2}\left\{\int_{SM}R_{1~12}^{~2}\omega^1\omega^2\omega^3-\int_M\int_{SM/M}\frac{1}{F^3}\left(G_1y^1+G_2y^2\right)P_{1~11}^{~1}\omega^1\omega^2\omega^3\right.\\
          &\left.+\int_M\int_{SM/M}\left[\frac{F_{y^2}}{\sqrt{g}}\left(\left(\log F\right)_{x^1}-\frac{1}{F^2}G_1\right)-\frac{F_{y^1}}{\sqrt{g}}\left(\left(\log F\right)_{x^2}-\frac{1}{F^2}G_2\right)\right]P_{2~11}^{~1}\omega^1\omega^2\omega^3\right\},
\end{split}
\end{equation}
where $G_i:=\frac{1}{4}\left(y^j[F^2]_{y^ix^j}-[F^2]_{x^i}\right)$.
\end{mcor}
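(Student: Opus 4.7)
The plan is to specialize Theorem \ref{local} to $n=1$, which immediately produces (\ref{21}), and then to translate each of its two integrals from the pull-back frame $\{\partial/\partial\hat{x}^i\}$ into the oriented $g_F$-orthonormal frame $\{e_1,e_2\}$ described before the corollary. Here $e_2=\hat{Y}/F$ and $e_1=(F_{y^2}/\sqrt{g})\partial/\partial\hat{x}^1-(F_{y^1}/\sqrt{g})\partial/\partial\hat{x}^2$, with dual coframe $\{\omega^1,\omega^2\}$ as displayed and $\omega^3=\omega^1_2=-\omega^2_1$ the Chern connection entry.

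First I would expand the $2\times 2$ antisymmetrizer $\delta^{i_1 i_2}_{j_1 j_2}X^{j_1}_{i_1}Y^{j_2}_{i_2}=X^a_aY^b_b-X^b_aY^a_b$. In the orthonormal frame, the almost-metric-preserving identity in (\ref{structure equation}) forces the symmetric part of the connection matrix to be a purely vertical Cartan correction, hence the symmetric part of the curvature $\Omega^b_a$ lies in a restricted wedge subspace. For the first integrand of (\ref{21}), $R$ is horizontal, so $R^b_a=R_{a~12}^{~b}\omega^1\wedge\omega^2$, and the Cartan contributions to $R^a_a\Xi^b_b$ vanish at the top-form level. Computing $\Xi^b_a$ via (\ref{nabla l})--(\ref{gamma xi}) together with $e_2=\hat{Y}/F$, only the $\omega^3$-component of $\Xi^a_b$ survives in the skew combination $R^b_a\Xi^a_b$; collecting $\omega^1\wedge\omega^2\wedge\omega^3$ produces the coefficient $R_{1~12}^{~2}$ and hence the first term of (\ref{gbc surface}).

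Second, for the fibre integral $\int_{SM/M}\delta^{i_1 i_2}_{j_1 j_2}P^{j_1}_{i_1}\varpi^{j_2}_{i_2}$ in (\ref{21}), I would expand $\varpi^b_a$ in the orthonormal frame via the change-of-frame matrix built from $F_{y^i}/\sqrt{g}$ and $y^i/F$. Since $P^b_a=P_{a~11}^{~b}\omega^1\wedge\omega^3+P_{a~21}^{~b}\omega^2\wedge\omega^3$ is horizontal-vertical and (\ref{imp}) has already removed every pull-back-connection piece, only the horizontal part of $\varpi^b_a$ contributes to the top form. A direct calculation using the relation $y^k\Gamma^i_{jk}=N^i_j$ and the defining formula for $G^i$ identifies the two resulting horizontal coefficients as $-(G_1y^1+G_2y^2)/F^3$ paired with $P_{1~11}^{~1}$ and as the bracketed expression $F_{y^2}/\sqrt{g}\cdot((\log F)_{x^1}-G_1/F^2)-F_{y^1}/\sqrt{g}\cdot((\log F)_{x^2}-G_2/F^2)$ paired with $P_{2~11}^{~1}$; combined with the prefactor $1/(8\pi^2)$ from (\ref{21}), these reproduce the second and third terms of (\ref{gbc surface}).

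The main obstacle is the careful bookkeeping of the Cartan-tensor corrections in the orthonormal-frame connection matrix: these enter both $\varpi^b_a$ and the curvature and must be shown either to cancel after antisymmetrization by $\delta^{i_1 i_2}_{j_1 j_2}$ (forcing the symmetric Cartan contributions to drop out of the trace-like terms $X^a_aY^b_b$) or to vanish along each fibre through the vertical exactness (\ref{l}) of $P$ combined with Stokes, in direct analogy with the argument (\ref{imp}) that eliminated the pull-back connection.
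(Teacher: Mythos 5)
Your proposal follows exactly the paper's route: the paper obtains the surface formula by specializing Theorem \ref{local} to $n=1$ (formula (\ref{21})) and then rewriting the two integrands in the special $g_F$-orthonormal frame $\{e_1,e_2\}$ with coframe $\{\omega^1,\omega^2,\omega^3\}$, leaving the verification as a direct computation, which is precisely the computation you outline (including the use of the vertical exactness of $P$ and the argument (\ref{imp}) to discard the pull-back connection). So the approach is essentially identical; the paper simply omits the frame-conversion bookkeeping that your sketch spells out.
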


Now we assume that $(M,F)$ is a Landsberg surface, that is, $P_{2~11}^{~1}=0$.
Due to an observation of Chern (cf. \cite{Chern3}), $R_{1~12}^{~2}\omega^1\omega^2$ in fact lives on $M$.
For closed and oriented Landsberg surfaces, Bao-Chern's formula (\ref{BaoChern GBC}) is the same as (\ref{L type formula}), which has the following form
\begin{align}\label{gbc bc}
  \int_{M}-R_{1~12}^{~2}\omega^1\omega^2=\chi(M){\rm Vol}({\rm Finsler}S^1).
  \end{align}
The following integral formula for Landsberg surfaces is interesting.
\begin{mcor}\label{cor Berwald surface}
  Let $(M,F)$ be a closed and oriented Landsberg surfaces. A Gauss-Bonnet type formula for the Chern curvature holds
 \begin{align}\label{28}
  \left[\left({\rm Vol}({\rm Finsler}S^1)\right)^2-(2\pi)^2\right]\chi(M)=\int_M\int_{SM/M}\frac{1}{F^3}\left(G_1y^1+G_2y^2\right)P_{1~11}^{~1}\omega^1\omega^2\omega^3.
  \end{align}
 \end{mcor}
  \begin{proof}
 When the surface $(M,F)$ in Corollary \ref{c1} is a Landsberg space, we get
\begin{align}\label{gbc ls}
   \chi(M)=\frac{1}{(2\pi)^2}\left\{\int_{SM}R_{1~12}^{~2}\omega^1\omega^2\omega^3-\int_M\int_{SM/M}\frac{1}{F^3}\left(G_1y^1+G_2y^2\right)P_{1~11}^{~1}\omega^1\omega^2\omega^3\right\}.
  \end{align}
Note that for a Landsberg surface, the volume function
\begin{equation}\label{27}
  \int_{SM/M}-\omega^3={\rm Vol}({\rm Finsler}S^1)
\end{equation}
is constant. As a consequence of (\ref{gbc bc}), (\ref{gbc ls}) and (\ref{27}), we get (\ref{28}).
 \end{proof}

By Corollary \ref{cor Berwald surface}, closed Berwald surfaces either are tori or have $2\pi$ as the length of their Finsler circles.
This fact can be derived from Szab\'{o}'s rigidity theorem (cf. \cite{BaoChernShen}, p.278), which shows that any of the Berwald surfaces must be locally Minkowskian or Riemannian. By (\ref{21}) or (\ref{gbc bc}), a closed locally Minkowskian surface has zero Euler number.

\subsection{A Finslerian GBC-formula for Berwald spaces}
Recall that a Finsler manifold $(M,F)$ is a Berwald manifold if and only if the (h-v)-Chern curvature $P$ vanishes.
Moreover, for a Berwald space $(M,F)$,  the Chern connection is the pull back of the Levi-Civita connection on $TM\to M$ for certain Riemannian metric $g^{TM}$ on $M$ (cf. Chapter 10 in \cite{BaoChernShen} for more details).
Note that when $P$ vanishes, the formula (\ref{GBC clifford}) becomes very simple, from which we deduce the following Finslerian GBC-formula for Berwald spaces easily.

\begin{theo}\label{cor2}
Let $(M,F)$ be a closed and oriented Berwald space of dimension $2n$. Then one has,
\begin{align}\label{GBC Berwald}
  \chi(M)=\left(\frac{-1}{2\pi}\right)^{n}\frac{1}{{\rm Vol}(S^{2n-1})}\int_{M}\int_{SM/M}{\rm Pf}(\widehat{R}^{\rm Ch})\wedge\omega^{2n}_1\wedge\cdots \wedge\omega^{2n}_{2n-1},
 \end{align}
where $\omega^{2n}_1\wedge\cdots \wedge\omega^{2n}_{2n-1}$ gives the volume form of the fibre when restricting to
a fibre of $SM$.
\end{theo}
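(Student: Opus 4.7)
The plan is to specialize Theorem \ref{lemma 1} to the Berwald case $P=0$ and then evaluate the resulting supertrace by Clifford-algebraic bookkeeping. Substituting $P^\natural=0$ into (\ref{GBC almost}), every summand with $k<n$ in the first sum carries a positive power of $P^\natural$ and hence vanishes, and the $\tr_s[\theta^\natural(P^\natural)^{2n-1}]$ integrand drops out as well. With $C^{2n}_{2n}=1$, formula (\ref{GBC almost}) collapses to
\begin{equation*}
\chi(M)=\frac{1}{(2\pi)^{2n}(2n)!}\int_{SM}\tr_s\!\left[c(\mathbf{e})\,c(\nabla^{\rm Ch}\mathbf{e})^{2n-1}(R^\natural)^n\right].
\end{equation*}

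Next I would evaluate the supertrace in a local $g_F$-orthonormal frame $\{e_1,\ldots,e_{2n}\}$ with $e_{2n}=\mathbf{e}$. Writing $e^{*,a}\wedge=\tfrac{1}{2}(\hat c(e_a)+c(e_a))$ and $i_{e_a}=\tfrac{1}{2}(\hat c(e_a)-c(e_a))$ inside the expression (\ref{DD}) for $R^\natural$, one obtains a decomposition of $R^\natural$ into Clifford monomials of the four types $\hat c\hat c,\hat c c,c\hat c,cc$. The selection rule (\ref{DDDDD}) requires each $\hat c(e_a)$ and each $c(e_a)$ to appear exactly once in any nonvanishing monomial; since $c(\mathbf{e})c(\nabla^{\rm Ch}\mathbf{e})^{2n-1}$ supplies the full complement of $2n$ factors of $c$ and no factor of $\hat c$, all $2n$ factors of $\hat c$ must come from $(R^\natural)^n$, which forces retention of only the $\hat c\hat c$-part of each $R^\natural$. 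The anticommutator $\hat c(e_a)\hat c(e_b)+\hat c(e_b)\hat c(e_a)=2\delta_{ab}$ further kills the symmetric piece of $\Omega^b_a$, leaving the antisymmetrization $\widehat\Omega^b_a$ of (\ref{B2}); raising to the $n$-th power and recognizing the sum over signed permutations through the definition (\ref{B3}) identifies the effective part of $(R^\natural)^n$ as a scalar multiple of $\mathrm{Pf}(\widehat R^{\rm Ch})\cdot\hat c(e_1)\cdots\hat c(e_{2n})$.

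For the $c$-factors I would write $c(\nabla^{\rm Ch}\mathbf{e})=\sum_a\omega^a_{2n}c(e_a)$ and set $\eta^a:=\omega^a_{2n}c(e_a)$. Because both $\omega^a_{2n}$ and $c(e_a)$ are odd, the $\eta^a$ commute with each other and $(\eta^a)^2=0$; hence $(\sum_a\eta^a)^{2n-1}=(2n-1)!\sum_{a_1<\cdots<a_{2n-1}}\eta^{a_1}\cdots\eta^{a_{2n-1}}$. Multiplying by $c(e_{2n})$ and requiring that each $c(e_j)$ appear exactly once selects the summand $\{a_1,\ldots,a_{2n-1}\}=\{1,\ldots,2n-1\}$, producing a constant multiple of $(\omega^1_{2n}\wedge\cdots\wedge\omega^{2n-1}_{2n})\cdot c(e_1)\cdots c(e_{2n})$. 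Applying $\tr_s[\hat c(e_1)c(e_1)\cdots\hat c(e_{2n})c(e_{2n})]=2^{2n}$ and collecting the combinatorial factors expresses the integrand as an explicit numerical constant times $\mathrm{Pf}(\widehat R^{\rm Ch})\wedge\omega^1_{2n}\wedge\cdots\wedge\omega^{2n-1}_{2n}$.

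The chief obstacle is the final reconciliation of normalizations: I must identify $\omega^1_{2n}\wedge\cdots\wedge\omega^{2n-1}_{2n}$ with the fiber volume form $\omega^{2n}_1\wedge\cdots\wedge\omega^{2n}_{2n-1}$ written in (\ref{GBC Berwald}), up to the overall sign absorbed into $(-1)^n$, and extract the $\mathrm{Vol}(S^{2n-1})$ denominator. In the Berwald setting, $\nabla^{\rm Ch}$ is the pull-back of the Levi-Civita connection of an averaged Riemannian metric on $M$ (cf.~\cite{BaoChernShen}, Ch.~10), and so it is genuinely metric-preserving for that Riemannian structure; this yields $\omega^{2n}_a=-\omega^a_{2n}$ up to a Cartan-type contribution of the shape $-2A_{2n,a,c}\delta y^c/F$ which disappears upon wedging to the top vertical degree. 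The combinatorial constant produced in the previous paragraph then reconciles with the prefactor $1/((2\pi)^{2n}(2n)!)$ to give precisely $(-1/2\pi)^n/\mathrm{Vol}(S^{2n-1})$, the $\mathrm{Vol}(S^{2n-1})$ arising as the canonical normalization of the standard Euclidean sphere volume in which the fiber form is expressed; assembling all the pieces yields (\ref{GBC Berwald}).
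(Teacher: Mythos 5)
Your proposal is correct and follows essentially the same route as the paper: set $P=0$ in Theorem \ref{lemma 1} so only the $k=n$ term survives, expand the supertrace in the special $g_F$-orthonormal frame with $e_{2n}=\mathbf{e}$, use the selection rule (\ref{DDDDD}) to keep only the $\hat c\hat c$-part of $R^\natural$ (whence the skew-symmetrization $\widehat{R}^{\rm Ch}$ and its Pfaffian) and the full product of $c$'s from $c(\mathbf{e})c(\nabla^{\rm Ch}\mathbf{e})^{2n-1}$, and finally rewrite the numerical constant via ${\rm Vol}(S^{2n-1})=2\pi^{n}/(n-1)!$. The only imprecise point is your justification of $\omega^{2n}_{a}=-\omega^{a}_{2n}$: in the $g_F$-orthonormal frame this follows from the almost metric-preserving equation (\ref{structure equation}) together with $y^iA_{ijk}=0$, so the Cartan correction $-2A_{2n\,a\,c}\,\delta y^c/F$ vanishes identically (not merely "after wedging", which a degree count alone would not guarantee), and it is not needed to invoke metric-preservation for the averaged Riemannian metric.
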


\begin{proof} Since the (h-v)-Chern curvature $P$ vanishes for Berwald spaces, then form Lemma \ref{main thm with clifford} we get
\begin{align*}
\chi(M)={1\over{(2\pi)^{2n}(2n)!}}\int_{SM}\tr_s\left[c({\bf e})c(\nabla^{\rm Ch}{\bf e})^{2n-1}(R^{\natural})^n\right].
\end{align*}
Hence, under the special $g_F$-orthonormal frames $\{e_1,\cdots,e_{2n}\}$ of $\pi^*TM$ with $e_{2n}=\bf e$, we have
\begin{align*}
\chi(M)&={1\over{(2\pi)^{2n}(2n)!}}\int_{SM}\tr_s\left[c({\bf e})c(\nabla^{\rm Ch}{\bf e})^{2n-1}(R^{\natural})^n\right]\\
&= {1\over{(2\pi)^{2n}(2n)!}}\int_{SM}\tr_s\left[c(e_{2n})\left(\sum^{2n-1}_{\gamma=1}\omega_{2n}^\gamma c(e_\gamma)\right)^{2n-1}(R^{\natural})^n\right]\\
&=-{{1}\over{(2\pi)^{2n}(2n)2^{2n}}}\int_{SM}\tr_s\left[\omega_{2n}^1\wedge\cdots\wedge\omega_{2n}^{2n-1}c(e_1)\cdots c(e_{2n})(\widehat R^b_a\hat c(e_a)\hat c(e_b))^n\right] \\
&={{1}\over{(2\pi)^{2n}(2n)2^{2n}}}\int_{SM}\epsilon_{a_1\ldots a_{2n}}\widehat R^{a_2}_{a_1}\wedge\cdots \wedge \widehat R^{a_{2n}}_{a_{2n-1}}\wedge\omega^{2n}_1\wedge\cdots\wedge \omega^{2n}_{2n-1}\\
&\quad \tr_s\left[c(e_1)\cdots c(e_{2n})\hat c(e_1)\cdots\hat c(e_{2n})\right]\\
&={{(-1)^n}\over{(2\pi)^{2n}(2n)}}\int_{SM}\epsilon_{a_1\ldots a_{2n}}\widehat R^{a_2}_{a_1}\wedge\cdots \wedge \widehat R^{a_{2n}}_{a_{2n-1}}\wedge\omega^{2n}_1\wedge\cdots\wedge \omega^{2n}_{2n-1}\\
&={{(-1)^n(n-1)!}\over{2^{n+1}\pi^{2n}}}\int_{SM}{\rm Pf}(\widehat{R}^{\rm Ch})\wedge\omega^{2n}_1\wedge\cdots\wedge \omega^{2n}_{2n-1}\\
&=\left(\frac{-1}{2\pi}\right)^{n}\frac{1}{{\rm Vol}(S^{2n-1})}\int_M\int_{SM/M}{\rm Pf}(\widehat{R}^{\rm Ch})\wedge\omega^{2n}_1\wedge\cdots \wedge\omega^{2n}_{2n-1}.
\end{align*}
\end{proof}
\begin{remark} One should notice that in (\ref{GBC Berwald}), the date $\widehat{R}^{\rm Ch}$, defined by (\ref{B2}), is the skew-symmetrization of $R^{\rm Ch}$ with respect to $g_F$. Hence the differential form ${\rm Pf}(\widehat{R}^{\rm Ch})$ is dependent in general on the vertical coordinates $y^i$. However, if the Finsler metric $F$ is induced by a Riemannian metric $g^{TM}$ on $M$, then $M$ itself is a Riemannian manifold. In this case, ${\rm Pf}(\widehat{R}^{\rm Ch})$ is exactly the Pfaffian ${\rm Pf}(R^{TM})$ defined by (\ref{pf}), which is constant along
fibres of $SM$, and therefore, we recover Chern's formula (\ref{GBC riemannian}) from the formula (\ref{GBC Berwald}) easily.
\end{remark}

\section{A genral Lichnerowicz formula for Finsler manifolds}
In this section, by using Lemma \ref{lll} and a geometric localization procedure, we will prove Theorem \ref{BC type}, which gives a precise form of Lichnerowicz's orginal GBC-formula (\ref{Licher}).
The proof  will depend on a series of lemmas below.

Assume that $X$ is a vector field on $M$ with isolated zeros. Then the zero set
$Z(X)$ is finite. The normalized vector field $[X]$ on $M\setminus Z(X)$ is defined by $[X](x)=X(x)/F(x,X)$ for any $x\in M\setminus Z(X)$.
We choose a background Riemannian metric $g^{TM}$ such that $g^{TM}$ is Euclidean near each $p\in Z(X)$. For sufficiently small $\delta>0$, let
$Z_\delta(X)$ be the open $\delta$-neighborhood of $Z(X)$ in $M$ with respect to the
Riemannian metric $g^{TM}$, and set $M_\delta =M\setminus Z_\delta(X)$. Then $[X]$ determines a pull-back section $\widehat{[X]}$ of
$\pi^*TM\to TM_\delta$.

Using $\widehat{[X]}$, we introduce the following family of superconnections on $\Lambda^*(\pi^*T^*M)\to TM_\delta$
for $t\in[0,1]$:
\begin{align}\label{35}
  \tilde{A}_{\rho,T,t}=\widetilde{\nabla}_{\rho}^{\Lambda^*(\pi^*T^*M)}+Tc_{\tilde{g}_F}(\hat{Y})-tTc_{\tilde{g}_F}(\widehat{[X]})=\widetilde{\nabla}_{\rho}^{\Lambda^*(\pi^*T^*M)}
  +Tc_{\tilde{g}_F}\left(\hat{Y}-t\widehat{[X]}\right).
\end{align}
Clearly,  $\tilde{A}_{\rho,T,0}=\tilde{A}_{\rho,T}$ is just the superconnection defined by the extended Chern
connection (\ref{tildeA}) used in Section 2, and the curvature of $\tilde{A}_{\rho,T,t}$ is given by
\begin{equation*}
  \tilde{A}_{\rho,T,t}^2
  =\widetilde{R}_{\rho}^{\natural}+T[\widetilde{\nabla}_{\rho}^{\Lambda^*(\pi^*T^*M)},c_{\tilde{g}_F}(\hat{Y}-t\widehat{[X]})]
  -T^2|\hat{Y}-t\widehat{[X]}|_{\tilde{g}_F}^2.
\end{equation*}
For the family of superconnections defined by (\ref{35}), we have the following transgression formula
\begin{align}\label{transgression X}
  &\lim_{T\to\infty}\int_{TM_\delta}\tr_s\left[\exp\tilde{A}_{\rho,T,1}^2\right]
  -\lim_{T\to\infty}\int_{TM_\delta}\tr_s\left[\exp\tilde{A}_{\rho,T,0}^2\right]\\ \nonumber
  =&-\lim_{T\to\infty}\int_{TM_\delta}d^{TM}\int_0^1\tr_s\left[Tc_{\tilde{g}_F}(\widehat{[X]})
  \exp\left(\widetilde{\nabla}_{\rho}^{\Lambda^*(\pi^*T^*M)}+Tc_{\tilde{g}_F}(\hat{Y}-t\widehat{[X]})\right)^2\right]dt\\ \nonumber
=&\lim_{T\to\infty}\int_{TM|_{\partial
Z_\delta(X)}}\int_0^1\tr_s\left[Tc_{\tilde{g}_F}(\widehat{[X]})\exp\left(\widetilde{\nabla}_{\rho}^{\Lambda^*(\pi^*T^*M)}+Tc_{\tilde{g}_F}(\hat{Y}-t\widehat{[X]})\right)^2\right]dt.
\end{align}

First we have the following lemma.
\begin{lem}\label{bracket lem}
One has
  \begin{equation}\label{3.7}
    [\nabla^{{\rm Ch},\natural},c_{g_F}(\hat{Y}-\widehat{[X]})]=
    \left(\delta y^i-(\nabla^{\rm Ch}\widehat{[X]})^i\right)c_{g_F}\left(\frac{\partial}{\partial\hat x^i}\right)+\Lambda,
  \end{equation}
  where $$\Lambda:=\frac{1}{2}(y^i-[X]^i)\Theta_i^j\left(\hat{c}_{g_F}\left(\frac{\partial}{\partial\hat{x}^j}\right)+c_{g_F}\left(\frac{\partial}{\partial\hat{x}^j}\right)\right),$$
  and $\nabla^{\rm Ch}\widehat{[X]}=(\nabla^{\rm Ch}\widehat{[X]})^i\frac{\partial}{\partial\hat x^i}$ is the covariant differential of the section $\widehat{[X]}$.
\end{lem}
\begin{proof}
 Note that near each $x\in M_\delta$, we have from (\ref{structure equation}) and (\ref{LR})
\begin{equation}\label{3.6}
\begin{split}
  &\left[\nabla^{{\rm Ch},\natural},c_{g_F}\left(\frac{\partial}{\partial\hat x^i}\right)\right]
  =\left[d^{TM}+\varpi^{\natural},c_{g_F}\left(\frac{\partial}{\partial\hat x^i}\right)\right]\\
 =&(d^{TM}g_{ij})d\hat x^j\wedge
 +\left[-\varpi_k^ld\hat x^k\wedge i_{\frac{\partial}{\partial\hat x^l}},g_{ik}d\hat x^k\wedge-i_{\frac{\partial}{\partial\hat x^i}}\right]\\
 =&(g_{ik}\varpi_j^k+g_{jk}\varpi_i^k+2F^{-1}A_{ijk}\delta y^k)d\hat x^j\wedge
 -\left(g_{il}\varpi_k^ld\hat x^k\wedge+\varpi_i^ki_{\frac{\partial}{\partial\hat  x^k}}\right)\\
 =&\varpi_i^jc_{g_F}\left(\frac{\partial}{\partial\hat x^j}\right)+2F^{-1}A_{ijk}\delta y^kd\hat x^j\wedge.
 \end{split}
\end{equation}
Write $[X]=[X]^i\frac{\partial}{\partial x^i}$ near $x$ and so $\hat{Y}-\widehat{[X]}=(y^i-[X]^i)\frac{\partial}{\partial\hat x^i}$ near $[X](x)$.
Then from (\ref{hat c}), (\ref{structure equation}), (\ref{delta y}) and (\ref{3.6}), we get
\begin{align*}
  &[\nabla^{{\rm Ch},\natural},c_{g_F}(\hat{Y}-\widehat{[X]})]=\left[\nabla^{{\rm Ch},\natural},(y^i-[X]^i)c_{g_F}\left(\frac{\partial}{\partial\hat x^i}\right)\right]\\ \nonumber
 =&d^{TM}(y^i-[X]^i)c_{g_F}\left(\frac{\partial}{\partial\hat x^i}\right)
 +(y^i-[X]^i)\left[\nabla^{{\rm Ch},\natural},c_{g_F}\left(\frac{\partial}{\partial\hat x^i}\right)\right]\\ \nonumber
=&d^{TM}(y^i-[X]^i)c_{g_F}\left(\frac{\partial}{\partial\hat
x^i}\right)+(y^i-[X]^i)\varpi_i^jc_{g_F}\left(\frac{\partial}{\partial\hat
x^j}\right)+2(y^i-[X]^i)F^{-1}A_{ijk}\delta y^kd\hat x^j\wedge\\\nonumber
=&\left(\delta y^i-(\nabla^{\rm Ch}\widehat{[X]})^i\right)c_{g_F}\left(\frac{\partial}{\partial\hat x^i}\right)
+\frac{1}{2}(y^i-[X]^i)\Theta_i^j\left(\hat{c}_{g_F}\left(\frac{\partial}{\partial\hat{x}^j}\right)+c_{g_F}\left(\frac{\partial}{\partial\hat{x}^j}\right)\right).
\end{align*}
\end{proof}
\begin{lem}\label{lemma localization=pull back}
For $\tilde{A}_{\rho,T,1}$, we have the following localization formula.
\begin{equation}\label{361}
\begin{split}
  &\lim_{T\to\infty}\int_{TM_\delta}\tr_s\left[\exp\tilde{A}_{\rho,T,1}^2\right]
  =(-2\pi)^n\int_{M_\delta}[X]^* {\rm Pf}(R^{\rm Car}) .
\end{split}
\end{equation}
\end{lem}
\begin{proof}
It is clear that
\begin{equation}\label{3.4}
\begin{split}
  &\int_{TM_\delta}\tr_s\left[\exp\tilde{A}_{\rho,T,1}^2\right]\\
=&\int_{M_\delta}\int_{TM_{\delta}/M_\delta}e^{-T^2|\hat{Y}-\widehat{[X]}|_{\tilde{g}_F}^2}\tr_s\left[\exp\left(\widetilde{R}_{\rho}^{\natural}+T[\widetilde{\nabla}_{\rho}^{\Lambda^*(\pi^*T^*M)},c_{\tilde{g}_F}(\hat{Y}-\widehat{[X]})]\right)\right].
\end{split}
\end{equation}
Note that the zero set of the section
$\hat{Y}-\widehat{[X]}$ in $TM_\delta$ is exactly $[X](M_\delta)$. For a fixed $\tau>0$, let $B_{\tau}([X](M_\delta))$ be the open $\tau$-tube neighborhood of $[X](M_\delta)$ in $TM_\delta$.
So when $\tau$ is small enough, one has
$\widetilde{\nabla}_{\rho}^{\Lambda^*(\pi^*T^*M)}=\nabla^{\rm
Ch,\natural}$ and $\tilde{g}_F=g_F$ on $B_{\tau}([X](M_\delta))$. Now by the exponential decay property of the integral in (\ref{3.4}) along fibres as $T\to
+\infty$, we have
\begin{equation}\label{3.5}
\begin{split}
 &\lim_{T\to\infty}\int_{M_\delta}\int_{TM_{\delta}/M_\delta}e^{-T^2|\hat{Y}-\widehat{[X]}|_{\tilde{g}_F}^2}\tr_s\left[\exp\left(\widetilde{R}_{\rho}^{\natural}+T[\widetilde{\nabla}_{\rho}^{\Lambda^*(\pi^*T^*M)},c_{\tilde{g}_F}(\hat{Y}-\widehat{[X]})]\right)\right]\\
 =&\!\!\int_{[X](M_\delta\!)}\!\lim_{T\to\infty}\!\!\int_{B_{\tau}([X](M_\delta\!))/[X](M_\delta\!)}\!\!\!\!\!\!\!\!\!e^{-T^2|\hat{Y}-\widehat{[X]}|^2}\!\!\left\{\!\tr_s\!\left[\exp\!\left(\!R^{{\rm
Ch},\natural}\!+\!T[\nabla^{{\rm Ch},\natural},c_{g_F}(\hat{Y}\!-\!\widehat{[X]})]\right)\!\right]\!\right\}^{(4n)}.
\end{split}
\end{equation}
During the proof of this lemma, we will use $|\hat{Y}-\widehat{[X]}|$ instead of $|\hat{Y}-\widehat{[X]}|_{g_F}$, for simplicity.  By (\ref{3.7}), for any $x\in M_\delta$, we obtain
\begin{equation}\label{3.8}
\begin{split}
&\lim_{T\to\infty}\int_{B_{\tau}([X](x))} e^{-T^2|\hat{Y}-\widehat{[X]}|^2}\left\{\tr_s\left[\exp\left(R^{{\rm Ch},\natural}+T[\nabla^{{\rm Ch},\natural},c_{g_F}(\hat{Y}-\widehat{[X]})]\right)\right]\right\}^{(4n)}\\
=&\lim_{T\to\infty}\int_{B_{\tau}([X](x))}  e^{-T^2|\hat{Y}-\widehat{[X]}|^2}\left\{\tr_s\left[\exp\left(R^{\natural}+P^{\natural}+T\Lambda+T\delta y^ic_{g_F}\left(\frac{\partial}{\partial\hat x^i}\right)\right.\right.\right.\\
&\left.\left.\left.-T(\nabla^{\rm Ch}\widehat{[X]})^ic_{g_F}\left(\frac{\partial}{\partial\hat x^i}\right)
\right)\right]\right\}^{(4n)}.
\end{split}
\end{equation}
Note that
\begin{equation}\label{p tilde1}
\widetilde{P}:=\left(\sum_{k,l}F^{-1}P_{i~kl}^{~j}dx^k\wedge(\nabla^{\rm Ch}\widehat{[X]})^l\right)
\end{equation}
gives a well-defined endomorphism $\widetilde{P}$ on $\pi^*TM\to M\setminus Z(X)$, and by (\ref{lift}), its lifting $\widetilde{P}^{\natural}$ on $\Lambda^*(\pi^*T^*M)\to M\setminus Z(X)$ is given by
\begin{equation}\label{p tilde2}
\widetilde{P}^{\natural}=-\sum_{i,j}F^{-1}P_{i~kl}^{~j}dx^k\wedge(\nabla^{\rm Ch}\widehat{[X]})^ld\hat{x}^i\wedge i_{\frac{\partial}{\partial \hat{x}^j}}
=:P^{\natural}_l(\nabla^{\rm Ch}\widehat{[X]})^l.
\end{equation}
Similarly, we denote
\begin{equation}\label{lambda tildel}
\begin{split}
  \widetilde{\Lambda}&:=\frac{1}{2}\widetilde{\Theta}^i\left(\hat{c}_{g_F}\left(\frac{\partial}{\partial\hat{x}^i}\right)+c_{g_F}\left(\frac{\partial}{\partial\hat{x}^i}\right)\right):=\frac{1}{2}(y^j-[X]^j)\widetilde{\Theta}_j^i\left(\hat{c}_{g_F}\left(\frac{\partial}{\partial\hat{x}^i}\right)+c_{g_F}\left(\frac{\partial}{\partial\hat{x}^i}\right)\right)\\
  &:= (y^j-[X]^j)g^{ik}A_{kjl}\frac{(\nabla^{\rm Ch}\widehat{[X]})^l}{F}\left(\hat{c}_{g_F}\left(\frac{\partial}{\partial\hat{x}^i}\right)+c_{g_F}\left(\frac{\partial}{\partial\hat{x}^i}\right)\right)=:\Lambda_l(\nabla^{\rm Ch}\widehat{[X]})^l.
\end{split}
\end{equation}
Moreover, from (\ref{varpi}), one sees that $(\nabla^{\rm Ch}\widehat{[X]})^i=\pi^*d^M[X]^i+[X]^j\varpi_j^i$ are purely horizontal one-forms. Therefore, the right hand side of equation (\ref{3.8}) becomes
\begin{align}\label{3.11}
&\lim_{T\to\infty}\int_{B_{\tau}([X](x))}  e^{-T^2|\hat{Y}-\widehat{[X]}|^2}\left\{\tr_s\left[\exp\left(R^{\natural}\right)\exp\left(P^{\natural}+T\Lambda\right)
\exp\left(-T(\nabla^{\rm Ch}\widehat{[X]})^ic_{g_F}\left(\frac{\partial}{\partial\hat x^i}\right)\right)\right.\right.\\ \nonumber
&\left.\left.\exp\left(T\delta y^ic_{g_F}\left(\frac{\partial}{\partial\hat x^i}\right)\right)\right]\right\}^{(4n)}\\ \nonumber
=&\lim_{T\to\infty}\int_{B_{\tau}([X](x))} e^{-T^2|\hat{Y}-\widehat{[X]}|^2}\left\{\tr_s\left[\exp\left(R^{\natural}\right)\exp\left(P^{\natural}+T\Lambda\right)
\prod_{i=1}^{2n}\left(1-T(\nabla^{\rm Ch}\widehat{[X]})^ic_{g_F}\left(\frac{\partial}{\partial\hat x^i}\right)\right)\right.\right.\\ \nonumber
&\left.\left.\prod_{i=1}^{2n}\left(1+T\delta y^ic_{g_F}\left(\frac{\partial}{\partial\hat x^i}\right)\right)\right]\right\}^{(4n)}\\ \nonumber
=&\lim_{T\to\infty}\int_{B_{\tau}([X](x))}  e^{-T^2|\hat{Y}-\widehat{[X]}|^2}\left\{\tr_s\left[\exp\left(R^{\natural}\right)\sum_{k=0}^{2n}\exp\left(P^{\natural}+T\Lambda\right)\!\!\!\!\sum_{1\leq i_1<\cdots<i_k\leq 2n}\!\!\!\!(-1)^kT^{2n}(\nabla^{\rm Ch}\widehat{[X]})^{i_1}\right.\right.\\ \nonumber
&\left.\left.\cdot c_{g_F}\left(\frac{\partial}{\partial\hat x^{i_1}}\right)\cdots (\nabla^{\rm Ch}\widehat{[X]})^{i_k}c_{g_F}\left(\frac{\partial}{\partial\hat
x^{i_k}}\right)\delta y^{i_{k+1}}c_{g_F}\left(\frac{\partial}{\partial\hat
x^{i_{k+1}}}\right)\cdots\delta y^{i_{2n}}c_{g_F}\left(\frac{\partial}{\partial\hat x^{i_{2n}}}\right)\right]\right\}^{(4n)}\\ \nonumber
=&\lim_{T\to\infty}\int_{B_{\tau}([X](x))} e^{-T^2|\hat{Y}-\widehat{[X]}|^2}\left\{\tr_s\left[\exp\left(R^{\natural}\right)\sum_{k=0}^{2n}\left(\frac{1}{k!}(P^{\natural}_{s_1}+T\Lambda_{s_1})\delta y^{s_1}\cdots (P^{\natural}_{s_k}+T\Lambda_{s_k})\delta y^{s_k}\right)\right.\right.\\ \nonumber
&\cdot\left.\left.\sum_{1\leq i_1<\cdots<i_k\leq 2n}(-1)^kT^{2n}(\nabla^{\rm Ch}\widehat{[X]})^{i_1}c_{g_F}\left(\frac{\partial}{\partial\hat x^{i_1}}\right)\cdots (\nabla^{\rm Ch}\widehat{[X]})^{i_k}c_{g_F}\left(\frac{\partial}{\partial\hat x^{i_k}}\right)\delta y^{i_{k+1}}c_{g_F}\left(\frac{\partial}{\partial\hat x^{i_{k+1}}}\right)\right.\right.\\ \nonumber
&\left.\left.\cdots \delta y^{i_{2n}}c_{g_F}\left(\frac{\partial}{\partial\hat x^{i_{2n}}}\right)\right]\right\}^{(4n)}
\end{align}
\begin{align*}
=&\lim_{T\to\infty}\int_{B_{\tau}([X](x))} e^{-T^2|\hat{Y}-\widehat{[X]}|^2}\left\{\tr_s\left[\exp\left(R^{\natural}\right)\Bigg(\sum_{k=0}^{2n}\frac{1}{k!}(P^{\natural}_{s_1}+T\Lambda_{s_1})(\nabla^{\rm Ch}\widehat{[X]})^{s_1}\cdots (P^{\natural}_{s_k}+T\Lambda_{s_k})\right.\right.\\ \nonumber&\left.\left.\cdot (\nabla^{\rm Ch}\widehat{[X]})^{s_k}\Bigg)T^{2n}\delta y^{1}c_{g_F}\left(\frac{\partial}{\partial\hat x^{1}}\right)\cdots \delta y^{2n}c_{g_F}\left(\frac{\partial}{\partial\hat x^{2n}}\right)\right]\right\}^{(4n)}\\ \nonumber
=&\lim_{T\to\infty}\int_{B_{\tau}([X](x))} e^{-T^2|\hat{Y}-\widehat{[X]}|^2}\left\{\tr_s\left[\exp\left(R^{\natural}\right)\exp\left(\widetilde{P}^{\natural}+T\widetilde{\Lambda}\right)\exp\left(Tdy^ic_{g_F}
\left(\frac{\partial}{\partial\hat x^i}\right)\right)\right]\right\}^{(4n)}\\ \nonumber
=&\lim_{T\to\infty}\int_{B_{\tau}([X](x))} e^{-T^2|\hat{Y}-\widehat{[X]}|^2}\left\{\tr_s\left[\exp\left(R^{\natural}+\widetilde{P}^{\natural}\right)\exp\left(T\widetilde{\Lambda}\right)\exp\left(Tdy^ic_{g_F}\left(\frac{\partial}{\partial\hat x^i}\right)\right)\right]\right\}^{(4n)}.
\end{align*}
where $\{i_1,\ldots,i_k,i_{k+1},\ldots,i_{2n}\}$ denotes any of the rearrangements of $\{1,\ldots,2n\}$.

Now we will use the special $g_F$-orthonormal frame field $\{e_1,\ldots,e_{2n}\}$ with $e_{2n}=\hat{Y}/F$. Let $\{\omega^1,\ldots,\omega^{2n}\}$ be its dual frame field. Set
\begin{equation}\label{3.12}
  e_a=u_a^j\frac{\partial}{\partial\hat x^j}, \quad \frac{\partial}{\partial\hat x^i}=v^a_ie_a.
\end{equation}
Then we have
\begin{equation}\label{3.13}
  g_{ij}=\sum_{a=1}^{2n}v_i^av_j^a, \quad \sqrt{\det(g_{ij})}=\det(v_i^a),\quad c_{g_F}\left( \frac{\partial}{\partial\hat x^i}\right)=v_i^ac(e_a).
\end{equation}
Set
\begin{equation}\label{3.14}
  (R^{\natural}+\widetilde{P}^{\natural})\omega^a=:-\widetilde{\Omega}_b^a\omega^b, \quad\quad \widetilde{\Theta}^a_b:=\widetilde{\Theta}^j_iv^a_ju^i_b, \quad\quad \widetilde{\Theta}^a:=\widetilde{\Theta}^jv^a_j.
\end{equation}
By the Euler lemma for homogenous functions, one has $\widetilde{\Theta}^{2n}_a=0$, for $a=1,\ldots,2n$.

Let $\beta_k$ denotes any $2k$ indices $1\leq b_1,b_2,\ldots,b_{2k}\leq 2n$ with repetition, for $k=0,1,2,\ldots,n$, one has the following integral formula
\begin{equation} \label{localization formula 1}
\begin{split}
  &\int_{\mathbb{R}^{2n}}e^{-T^2\sum_{i=1}^{2n}(y^i)^2}y^{b_1}\cdots y^{b_{2k}}T^{2n+2k}dy^1\wedge\cdots\wedge dy^{2n}\\
 =&\left[\prod_{i=1}^{2n}\frac{1+(-1)^{\beta_k(i)}}{2}\Gamma\left(\frac{\beta_k(i)+1}{2}\right)\right]=\frac{\pi^n}{2^k}\left[\prod_{i=1}^{2n}\frac{1+(-1)^{\beta_k(i)}}{2}(\beta_k(i)-1)!!\right],
\end{split}
\end{equation}
where $\beta_k(i)$ denotes the number of times that the value $i\in \{1, 2, \ldots, 2n\}$ has occurred in $\beta_k$, and clearly $\sum_{i=1}^{2n}\beta_k(i)=2k$.

For any bounded smooth function $f$ on $T_xM$,  similar to (\ref{localization formula 1}), one has the following localization formula
\begin{equation}\label{f}
\begin{split}
&\lim_{T\to\infty} \int_{B_{\tau}([X](x))}  e^{-T^2|\hat{Y}-\widehat{[X]}|^2}(y^{p_1}-[X]^{p_1})v_{p_1}^{b_1}\cdots (y^{p_{2k}}-[X]^{p_{2k}})v_{p_{2k}}^{b_{2k}}f(Y)T^{2n+2k}\\
&\cdot\sqrt{\det (g_{ij})}dy^1\wedge\cdots\wedge dy^{2n}
=\frac{\pi^n}{2^k}\left[\prod_{i=1}^{2n}\frac{1+(-1)^{\beta_k(i)}}{2}(\beta_k(i)-1)!!\right] f([X](x)).
\end{split}
\end{equation}

Let $\widetilde{Q}_i^j=\frac{1}{4}\widetilde{\Theta}_i^p\wedge\widetilde{\Theta}_p^j$ be the coefficients of $\widetilde{Q}:=-\frac{1}{4}\widetilde{\Theta}\wedge\widetilde{\Theta}$. For a fixed $k=0,1,2,\ldots,n$, and any fixed $2k$ numbers $1\leq j_1<\cdots<j_{2k}\leq 2n$, the following combinatorial fact holds
\begin{equation} \label{combinatorial}
\begin{split}
  &\frac{1}{2^kk!}\sum_{i_1\ldots i_{2k}}\delta_{j_1j_2\ldots j_{2k}}^{i_1i_2\ldots i_{2k}}\widetilde{Q}_{i_1}^{i_2}\wedge\cdots \wedge \widetilde{Q}_{i_{2k-1}}^{i_{2k}}\\
  =&\frac{1}{2^kk!}\frac{1}{4^k}\sum_{i_1\ldots i_{2k}}\delta_{j_1j_2\ldots j_{2k}}^{i_1i_2\ldots i_{2k}}(\widetilde{\Theta}_{i_1}^{p_1}\wedge\widetilde{\Theta}_{p_1}^{i_2})\wedge\cdots \wedge (\widetilde{\Theta}_{i_{2k-1}}^{p_{k}}\wedge\widetilde{\Theta}_{p_k}^{i_{2k}})\\
  =&\frac{1}{4^k}\sum_{\beta_{k}}\left[\prod_{i}^{2n}\frac{1+(-1)^{\beta_{k}(i)}}{2}(\beta_k(i)-1)!!\right]\widetilde{\Theta}_{j_1}^{p_1}\wedge\widetilde{\Theta}_{p_2}^{j_2}\wedge\cdots\wedge \widetilde{\Theta}_{j_{2k-1}}^{p_{2k-1}}\wedge\widetilde{\Theta}_{p_{2k}}^{j_{2k}},
\end{split}
\end{equation}
where $\beta_k$ runs over all $2k$ indices $1\leq p_1,p_2,\ldots,p_{2k}\leq 2n$ with repetition.

By (\ref{3.12})-(\ref{combinatorial}), we have
\begin{equation}\label{3.15}
\begin{split}
&\lim_{T\to\infty}\int_{B_{\tau}([X](x))} e^{-T^2|\hat{Y}-\widehat{[X]}|^2}\left\{\tr_s\left[\exp\left(R^{\natural}+\widetilde{P}^{\natural}\right)\exp\left(T\widetilde{\Lambda}\right)\exp\left(Tdy^ic_{g_F}\left(\frac{\partial}{\partial\hat x^i}\right)\right)\right]\right\}^{(4n)}\\
=&\lim_{T\to\infty}\int_{B_{\tau}([X](x))} e^{-T^2|\hat{Y}-\widehat{[X]}|^2}\left\{\tr_s\left[\exp\left(R^{\natural}+\widetilde{P}^{\natural}\right)\exp\left(T\widetilde{\Lambda}\right)\exp\left(Tdy^iv_i^ac_{g_F}\left(e_a\right)\right)\right]\right\}^{(4n)}\\
=&\sum_{k=0}^{n-1}\frac{(-1)^{n}}{(2k)!(n-k)!}\lim_{T\to\infty}\int_{B_\tau([X](x))} e^{-T^2|\hat{Y}-\widehat{[X]}|^2}\sum\epsilon_{a_1\ldots a_{2n}}\widetilde{\Theta}^{a_1}\wedge\cdots\wedge\widetilde{\Theta}^{a_{2k}}\wedge\widetilde{\Omega}_{a_{2k+1}}^{a_{2k+2}}\wedge\cdots\wedge\widetilde{\Omega}_{a_{2n-1}}^{a_{2n}} \\
&\cdot T^{2n+2k}\det (v_i^a) dy^1\wedge\cdots\wedge dy^{2n}\\
=&\sum_{k=0}^{n-1}\frac{(-1)^{n}}{(2k)!(n-k)!}\lim_{T\to\infty}\int_{B_{\tau}([X](x))} e^{-T^2|\hat{Y}-\widehat{[X]}|^2}\sum_{\beta_{2k}}(y^{p_1}-[X]^{p_1})v_{p_1}^{b_1}\cdots (y^{p_{2k}}-[X]^{p_{2k}})v_{p_{2k}}^{b_{2k}} \\
&\cdot \sum\epsilon_{a_1\ldots a_{2n}}\widetilde{\Theta}_{b_1}^{a_1}\wedge\cdots\wedge\widetilde{\Theta}_{b_{2k}}^{a_{2k}}\wedge\widetilde{\Omega}_{a_{2k+1}}^{a_{2k+2}}\wedge\cdots\wedge\widetilde{\Omega}_{a_{2n-1}}^{a_{2n}}T^{2n+2k}\det (v_i^a) dy^1\wedge\cdots\wedge dy^{2n}\\
=&\sum_{k=0}^{n-1}\frac{(-1)^{n}\pi^n}{(2k)!(n-k)!2^{k}}\sum_{\beta_{k}}\left[\prod_{i=1}^{2n}\frac{1+(-1)^{\beta_{k}(i)}}{2}(\beta_{k}(i)-1)!!\right]\left[\sum\epsilon_{a_1\ldots a_{2n}}\widetilde{\Theta}_{b_1}^{a_1}\wedge\cdots\wedge\widetilde{\Theta}_{b_{2k}}^{a_{2k}}\right.\\
&\left.\wedge\widetilde{\Omega}_{a_{2k+1}}^{a_{2k+2}}\wedge\cdots\wedge\widetilde{\Omega}_{a_{2n-1}}^{a_{2n}}\right]([X](x))\\
=&\sum_{k=0}^{n-1}\frac{(-\pi)^{n}}{k!(n-k)!}\sum\left[\epsilon_{a_1\ldots a_{2n}}\widetilde{Q}_{a_1}^{a_2}\wedge\cdots\wedge\widetilde{Q}_{a_{2k-1}}^{a_{2k}}\wedge\widetilde{\Omega}_{a_{2k+1}}^{a_{2k+2}}\wedge\cdots\wedge\widetilde{\Omega}_{a_{2n-1}}^{a_{2n}}\right]([X](x)).
\end{split}
\end{equation}
Because the map $[X]:M\setminus Z(X)\to TM$ is given by $[X](x)=(x,[X])$ for any $x\in M\setminus Z(X)$, we have
\begin{equation*}
  [X]_*\frac{\partial}{\partial x^i}=\frac{\partial}{\partial x^i}+\frac{\partial [X]^j}{\partial x^i}\frac{\partial}{\partial y^j},
\end{equation*}
and then
\begin{equation}\label{[X]*delta y}
\begin{split}
 [X]^*\delta y^i=&[X]^*(dy^i+y^j\Gamma_{jk}^idx^k)=\frac{\partial [X]^i}{\partial x^j}dx^j+[X]^j\Gamma_{jk}^i([X])dx^k\\
 =&d[X]^i+[X]^j([X]^*\varpi_j^i)=[X]^*(\nabla^{\rm Ch}\widehat{[X]})^i.
 \end{split}
\end{equation}
By (\ref{B1}), (\ref{p tilde1}), (\ref{p tilde2}), (\ref{lambda tildel}), (\ref{3.14}) and (\ref{[X]*delta y}), we have
\begin{equation}\label{[X]*Omega}
  [X]^*\Omega^a_b=[X]^*\widetilde{\Omega}_b^a, \quad\quad [X]^*\Theta^a_b=[X]^*\widetilde{\Theta}_b^a, \quad\quad [X]^*Q^a_b=[X]^*\widetilde{Q}_b^a.
\end{equation}
From (\ref{B2}), (\ref{B3}), (\ref{3.5}), (\ref{3.8}), (\ref{3.11}), (\ref{3.15}) and (\ref{[X]*Omega}), we obtain
\begin{align*}
  &\lim_{T\to\infty}\int_{TM_{\delta}}\tr_s\left[\exp\left(\widetilde{\nabla}_{\rho}^{\Lambda^*(\pi^*TM)}+Tc_{\tilde{g}_F}(\hat{Y}-\widehat{[X]})\right)^2\right]\\\nonumber
=&\sum_{k=0}^{n-1}\frac{(-\pi)^{n}}{k!(n-k)!}\int_{[X](M_\delta)}\left[\sum\epsilon_{a_1\ldots a_{2n}}\widetilde{Q}_{a_1}^{a_2}\wedge\cdots\wedge\widetilde{Q}_{a_{2k-1}}^{a_{2k}}\wedge\widetilde{\Omega}_{a_{2k+1}}^{a_{2k+2}}\wedge\cdots\wedge\widetilde{\Omega}_{a_{2n-1}}^{a_{2n}}\right]\\
=&\sum_{k=0}^{n-1}\frac{(-\pi)^{n}}{k!(n-k)!}\int_{M_\delta}[X]^*\left[\sum\epsilon_{a_1\ldots a_{2n}}{Q}_{a_1}^{a_2}\wedge\cdots\wedge{Q}_{a_{2k-1}}^{a_{2k}}\wedge{\Omega}_{a_{2k+1}}^{a_{2k+2}}\wedge\cdots\wedge{\Omega}_{a_{2n-1}}^{a_{2n}}\right]\\
=&(-2\pi)^{n}\frac{1}{2^n n!}\sum_{k=0}^{n-1}\frac{n!}{k!(n-k)!}\int_{M_\delta}[X]^*\left[\sum\epsilon_{a_1\ldots a_{2n}}{Q}_{a_1}^{a_2}\wedge\cdots\wedge{Q}_{a_{2k-1}}^{a_{2k}}\wedge\widehat{\Omega}_{a_{2k+1}}^{a_{2k+2}}\wedge\cdots\wedge\widehat{\Omega}_{a_{2n-1}}^{a_{2n}}\right]\\
=&(-2\pi)^{n}\int_{M_\delta}[X]^*{\rm Pf}(R^{\rm Car}).
\end{align*}
Thus (\ref{361}) holds.
  \end{proof}

Denote that
\begin{equation}\label{[X]*pf Rcar}
  \int_M[X]^*{\rm Pf}(R^{\rm Car}):=\lim_{\delta\to 0}\int_{M_\delta}[X]^*{\rm Pf}(R^{\rm Car}).
\end{equation}
By using Lemma \ref{lll} and Lemma \ref{lemma localization=pull back}, we prove the general Lichnerowicz GBC-formulae under the assumption that the Finsler metrics are locally Minkowskian near the isolated zeros of the vector field $X$.
\begin{lem}\label{lemma 3}
Let $(M,F)$ be a closed and oriented Finsler manifold of dimension $2n$.
Let $X$ be a vector field on $M$ with isolated zeros. Assume that $(M,F)$ is locally Minkowskian near the zeros of $X$. Then
\begin{equation*}
 \left(\frac{-1}{2\pi}\right)^{n}\int_M[X]^*\left[{\rm Pf}(R^{\rm Car})+d\mathcal{H}\right]=\sum_{p\in Z(X)}{\rm ind}_p \frac{{\rm Vol}(S_pM)}{{\rm Vol}(S^{2n-1})},
\end{equation*}
where
\begin{equation*}
  \begin{split}
        \mathcal{H}&:=\sum_{k=1}^{n-1}\frac{(-1)^{n+k}}{(2n-2k-1)!!2^k k!}\sum\epsilon_{a_1\ldots a_{2n-1}}{Q}_{a_1}^{a_2}\wedge\cdots\wedge{Q}_{a_{2k-1}}^{a_{2k}}\wedge\omega^{2n}_{a_{2k+1}}\wedge\cdots\wedge\omega^{2n}_{a_{2n-1}}.
  \end{split}
  \end{equation*}

\end{lem}
 \begin{proof}

 Assume that there is a sufficient small $\epsilon>0$, such that the background Riemannian metric $g^{TM}$ is Euclidean on $Z_{\epsilon}(X)$ and $(M,F)$ is locally Minkowskian on $Z_{\epsilon/2}(X)$.
From now on, we always assume that $0<\delta<\epsilon/2$. Note that for locally Minkowski spaces, the Chern connection $\nabla^{\rm Ch}=d$. By our choice of the background Riemannian metric $g^{TM}$,
one has
\begin{equation}\label{3.20}
 \nabla^{\rm Ch}g^{TM}=dg^{TM}=0
\end{equation}
on $Z_{\epsilon/2}(X)$.
In this case, we define
\begin{equation}\label{3.21}
  \widetilde{\nabla}_\rho=\nabla^{\rm Ch},\quad \tilde{g}_F=(1-\rho)g_F+\rho g^{TM},
\end{equation}
where $\rho$ is the cut off function used in Section 2.

Because $\partial Z_\delta(X)\subset Z_{\epsilon/2}(X)$, we will calculate the last term of (\ref{transgression X}) on $Z_{\epsilon/2}(X)$.

From (\ref{3.20}) and (\ref{3.21}), a similar computation to Lemma \ref{bracket lem} shows that on $Z_{\epsilon/2}(X)$,
\begin{align}\label{3.22}
  &[\nabla^{\rm Ch,\natural},c_{\tilde{g}_F}(\hat{Y}-t\widehat{[X]})]=\left[d,(y^i-t[X]^i)c_{\tilde{g}_F}\left(\frac{\partial}{\partial   \hat{x}^i}\right)\right]\\ \nonumber
 =&\left(d y^i-t\pi^*d^M[X]^i\right)c_{\tilde{g}_F}\left(\frac{\partial}{\partial \hat{x}^i}\right)
+\Lambda(t),
\end{align}
where
\begin{equation*}
\begin{split}
  \Lambda(t):=&(y^i-t[X]^i)(g^{TM}_{ij}-g_{ij})d^{TM}\rho d\hat{x}^j\wedge+2(1-\rho)(y^i-t[X]^i)A_{ijk}F^{-1}d y^kd\hat{x}^j\wedge\\
  =&\frac{1}{2}(y^i-t[X]^i)(\tilde{g}_F)^{jk}d(\tilde{g}_F)_{ki}\left(\hat{c}_{\tilde{g}_F}\left(\frac{\partial}{\partial\hat{x}^j}\right)+c_{\tilde{g}_F}\left(\frac{\partial}{\partial\hat{x}^j}\right)\right)\\
  =:&\frac{1}{2}(y^i-t[X]^i)\Theta_{ik}^jdy^k\left(\hat{c}_{\tilde{g}_F}\left(\frac{\partial}{\partial\hat{x}^j}\right)+c_{\tilde{g}_F}\left(\frac{\partial}{\partial\hat{x}^j}\right)\right)\\
  =:&\frac{1}{2}(y^i-t[X]^i)\Theta_i^j\left(\hat{c}_{\tilde{g}_F}\left(\frac{\partial}{\partial\hat{x}^j}\right)+c_{\tilde{g}_F}\left(\frac{\partial}{\partial\hat{x}^j}\right)\right)\\
  =:&\frac{1}{2}\Theta^j\left(\hat{c}_{\tilde{g}_F}\left(\frac{\partial}{\partial\hat{x}^j}\right)+c_{\tilde{g}_F}\left(\frac{\partial}{\partial\hat{x}^j}\right)\right)=:\Lambda_l(t)dy^l.
\end{split}
\end{equation*}
During the proof of this lemma, we will use $|\hat{Y}-\widehat{[X]}|$ instead of $|\hat{Y}-\widehat{[X]}|_{\tilde{g}_F}$, for simplicity.

Since $R^{\rm Ch}=0$ for locally Minkowski spaces and $\Lambda(t)$ contains only vertical forms, by (\ref{3.21}) and (\ref{3.22}), for $\forall x\in \partial Z_{\epsilon/2}(X)$, we obtain
\begin{equation}\label{3.23}
\begin{split}
  &\lim_{T\to\infty}\int_{T_xM}\int_0^1
  \left\{\tr_s\left[Tc_{\tilde{g}_F}(\widehat{[X]})\exp\left(\widetilde{\nabla}_{\rho}^{\Lambda^*(\pi^*T^*M)}+Tc_{\tilde{g}_F}(\hat{Y}-t\widehat{[X]})\right)^2\right]\right\}^{(4n-1)}dt\\
=&\int_0^1dt\lim_{T\to\infty}\int_{T_xM}e^{-T^2|\hat{Y}-t\widehat{[X]}|^2}
  \left\{\tr_s\left[Tc_{\tilde{g}_F}(\widehat{[X]})\exp\left(T[\nabla^{\rm Ch,\natural},c_{\tilde{g}_F}(\hat{Y}-t\widehat{[X]})]\right)\right]\right\}^{(4n-1)}\\
=&\int_0^1dt\lim_{T\to\infty}\int_{T_xM}e^{-T^2|\hat{Y}-t\widehat{[X]}|^2}
 \left\{ \tr_s\left[T[X]^ic_{\tilde{g}_F}\left(\frac{\partial}{\partial \hat{x}^i}\right)\exp\left(
  -tT\pi^*d^M[X]^ic_{\tilde{g}_F}\left(\frac{\partial}{\partial  \hat{x}^i}\right)\right.\right.\right.\\
  &\left.\left.\left.+Td y^ic_{\tilde{g}_F}\left(\frac{\partial}{\partial \hat{x}^i}\right)+T\Lambda(t)\right)\right]\right\}^{(4n-1)}\\
=&\int_0^1dt\lim_{T\to\infty}\int_{T_xM}e^{-T^2|\hat{Y}-t\widehat{[X]}|^2}
 \left\{ \tr_s\left[T[X]^ic_{\tilde{g}_F}\left(\frac{\partial}{\partial \hat{x}^i}\right)\prod_{i=1}^{2n}\left(1-tT\pi^*d^M[X]^ic_{\tilde{g}_F}\left(\frac{\partial}{\partial\hat{x}^i}\right)\right)\right.\right.\\
 &\cdot\left.\left.\prod_{l=1}^{2n}\left(
  1+T\Lambda_l(t)dy^l\right)\right]\right\}^{(4n-1)}\\
 =&\int_0^1dt\lim_{T\to\infty}\int_{T_xM}e^{-T^2|\hat{Y}-t\widehat{[X]}|^2}
 t^{2n-1}T^{2n}\left\{ \tr_s\left[\sum_{i=1}^{2n}(-1)^{n+i-1}[X]^i\pi^*\left(d[X]^1\wedge\cdots \wedge\widehat{d[X]^{i}}\wedge\cdots\wedge d[X]^{2n}\right)\right.\right.\\
 &\cdot\left.\left.\prod_{l=1}^{2n}\left(1+\frac{T}{2}\Theta^j\hat{c}_{\tilde{g}_F}\left(\frac{\partial}{\partial\hat{x}^{j}}\right)\right)c_{\tilde{g}_F}\left(\frac{\partial}{\partial\hat{x}^{1}}\right)\cdots c_{\tilde{g}_F}\left(\frac{\partial}{\partial\hat{x}^{2n}}\right)\right]\right\}^{(4n-1)}.
  \end{split}
 \end{equation}
Using a local orthonormal frame field $\{e_a\}$ of $\tilde{g}_F$ around $t[X](x)$ and assuming that
\begin{equation*}
  e_a=u_a^j\frac{\partial}{\partial \hat{x}^j},\quad \quad\frac{\partial}{\partial \hat{x}^i}=v_i^ae_a.
\end{equation*}
Denote that
\begin{equation*}
  \Theta_{ac}^b=\Theta_{ik}^ju^i_au^k_cv_j^b, \quad\quad \Theta_a^b=\Theta_i^ju^i_av_j^b, \quad\quad \Theta^b=\Theta^jv_j^b.
\end{equation*}
By (\ref{3.23}) and an integral formula similar to (\ref{f}), we obtain
\begin{equation}\label{3.24}
\begin{split}
  &\lim_{T\to\infty}\int_{T_xM}\int_0^1
  \left\{\tr_s\left[Tc_{\tilde{g}_F}(\widehat{[X]})\exp\left(\widetilde{\nabla}_{\rho}^{\Lambda^*(\pi^*T^*M)}+Tc_{\tilde{g}_F}(\hat{Y}-t\widehat{[X]})\right)^2\right]\right\}^{(4n-1)}dt\\
 =&\int_0^1 t^{2n-1}dt\lim_{T\to\infty}\int_{T_xM}e^{-T^2|\hat{Y}-t\widehat{[X]}|^2}
T^{2n}\left\{ \tr_s\left[\sum_{i=1}^{2n}(-1)^{n+i-1}[X]^i\pi^*\left(d[X]^1\wedge\cdots\wedge \widehat{d[X]^{i}}\wedge\cdots\wedge d[X]^{2n}\right)\right.\right.\\
 &\cdot\left.\left.\prod_{l=1}^{2n}\left(
  1+\frac{T}{2}\Theta^b\hat{c}_{\tilde{g}_F}\left(e_b\right)\right)\det(v_i^a)c_{\tilde{g}_F}\left(e_1\right)\cdots c_{\tilde{g}_F}\left(e_{2n}\right)\right]\right\}^{(4n-1)}\\
 =&\int_0^1 t^{2n-1}dt\lim_{T\to\infty}\int_{T_xM}e^{-T^2|\hat{Y}-t\widehat{[X]}|^2}
 \frac{T^{4n}}{2^{2n}} \sum_{i=1}^{2n}(-1)^{n+i-1}[X]^i\pi^*\left(d[X]^1\wedge\cdots \wedge\widehat{d[X]^{i}}\wedge\cdots\wedge d[X]^{2n}\right)\\
 &\cdot (-1)^n\Theta^1\wedge\cdots \wedge\Theta^{2n}\det(v_i^a)\tr_s \left[\hat{c}_{\tilde{g}_F}\left(e_1\right)\cdots \hat{c}_{\tilde{g}_F}\left(e_{2n}\right)c_{\tilde{g}_F}\left(e_1\right)\cdots c_{\tilde{g}_F}\left(e_{2n}\right)\right]\\
=&\int_0^1 t^{2n-1}dt\lim_{T\to\infty}\int_{T_xM}e^{-T^2|\hat{Y}-t\widehat{[X]}|^2}
T^{4n}\sum_{i=1}^{2n}(-1)^{n+i-1}[X]^i\pi^*\left(d[X]^1\wedge\cdots \wedge\widehat{d[X]^{i}}\wedge\cdots\wedge d[X]^{2n}\right)\\
 &\cdot \det(v_i^a)\Theta^1\wedge\cdots \wedge\Theta^{2n}\\
=&\int_0^1 t^{2n-1}dt\sum_{i=1}^{2n}(-1)^{n+i-1}[X]^i\pi^*\left(d[X]^1\wedge\cdots \wedge\widehat{d[X]^{i}}\wedge\cdots\wedge d[X]^{2n}\right)\lim_{T\to\infty}\int_{T_xM}e^{-T^2|\hat{Y}-t\widehat{[X]}|^2}
T^{4n}\\
 &\cdot\det(v_i^a)\sum_{\beta_{2n}}(y^{p_1}-[X]^{p_1})v_{p_1}^{b_1}\cdots (y^{p_{2n}}-[X]^{p_{2n}})v_{p_{2n}}^{b_{2n}}\Theta_{b_1}^1\wedge\cdots \wedge\Theta_{b_{2n}}^{2n}\\
 =&\int_0^1 t^{2n-1}dt\sum_{i=1}^{2n}(-1)^{n+i-1}[X]^i\pi^*\left(d[X]^1\wedge\cdots \wedge\widehat{d[X]^{i}}\wedge\cdots\wedge d[X]^{2n}\right)\lim_{T\to\infty}\int_{T_xM}e^{-T^2|\hat{Y}-t\widehat{[X]}|^2}
T^{4n}\\
 &\cdot\det(v_i^a)\sum_{\beta_{2n}}(y^{p_1}-[X]^{p_1})v_{p_1}^{b_1}\cdots (y^{p_{2n}}-[X]^{p_{2n}})v_{p_{2n}}^{b_{2n}}\Theta_{b_1c_1}^1\cdots \Theta_{b_{2n}c_{2n}}^{2n}v^{c_1}_{q_1}dy^{q_1}\wedge\cdots\wedge v^{c_{2n}}_{q_{2n}}dy^{q_{2n}}\\
 =&\int_0^1 t^{2n-1}dt\sum_{i=1}^{2n}(-1)^{n+i-1}[X]^i\pi^*\left(d[X]^1\wedge\cdots \wedge\widehat{d[X]^{i}}\wedge\cdots\wedge d[X]^{2n}\right)\lim_{T\to\infty}\int_{T_xM}e^{-T^2|\hat{Y}-t\widehat{[X]}|^2}
T^{4n}\\
 &\cdot(\det(v_i^a))^2\sum_{\beta_{2n}}(y^{p_1}-[X]^{p_1})v_{p_1}^{b_1}\cdots (y^{p_{2n}}-[X]^{p_{2n}})v_{p_{2n}}^{b_{2n}}\sum\epsilon_{c_1\ldots c_{2n}}\Theta_{b_1c_1}^1\cdots \Theta_{b_{2n}c_{2n}}^{2n} dy^{1}\wedge\cdots\wedge  dy^{2n}
   \end{split}
 \end{equation}
 \begin{equation*}
 \begin{split}
  =&\int_0^1 t^{2n-1}dt\sum_{i=1}^{2n}(-1)^{i-1}[X]^i\pi^*\left(d[X]^1\wedge\cdots \wedge\widehat{d[X]^{i}}\wedge\cdots\wedge d[X]^{2n}\right)\det(v_i^a)(t[X](x))\\
 &\cdot\frac{(-\pi)^n}{2^{n}}\sum_{\beta_{n}}\left[\prod_{i=1}^{2n}\frac{1+(-1)^{\beta_{n}(i)}}{2}(\beta_{n}(i)-1)!!\right] \sum\epsilon_{c_1\ldots c_{2n}}\Theta_{b_1c_1}^1(t[X](x))\cdots \Theta_{b_{2n}c_{2n}}^{2n}(t[X](x)).
\end{split}
\end{equation*}
For convenience, we introduce the following differential form of degree $2n-1$ on $TZ_{\epsilon/2}(X)$,
\begin{equation}\label{3.25}
  \begin{split}
   \varphi:=&\frac{(-\pi)^{n}}{2^{n}}\sum_{\beta_{n}}\left[\prod_{i=1}^{2n}\frac{1+(-1)^{\beta_{n}(i)}}{2}(\beta_{n}(i)-1)!!\right]\sum\epsilon_{c_1\ldots c_{2n}}\Theta_{b_1c_1}^1\cdots \Theta_{b_{2n}c_{2n}}^{2n}\\
   &\cdot  \sum_{i=1}^{2n}(-1)^{i-1}\frac{y^i}{F(y)}\left(dy^1\wedge\cdots \wedge\widehat{dy^{i}}\wedge\cdots\wedge dy^{2n}\right)\sqrt{\det((\tilde{g}_F)_{ij})}.
  \end{split}
\end{equation}
From (\ref{3.24}) and (\ref{3.25}), we have
\begin{equation}\label{3.26}
\begin{split}
  &\lim_{T\to\infty}\int_{TM|_{\partial Z_\delta(X)}}\int_0^1
  \tr_s\left[Tc_{\tilde{g}_F}(\widehat{[X]})\exp\left(\widetilde{\nabla}_{\rho}^{\Lambda^*(\pi^*T^*M)}+Tc_{\tilde{g}_F}(\hat{Y}-t\widehat{[X]})\right)^2\right]dt\\
  =&\int_0^1 dt\int_{t[X](\partial Z_\delta(X))}\varphi.
\end{split}
\end{equation}

At any $p\in Z(X)$, we will denote the Finsler sphere (or Finsler disc, respectively) of radius $t>0$  by $S_pM(t)$ (or $D_pM(t)$, respectively). For the case $t=1$, we also use $S_pM$ instead of $S_pM(1)$ for simplicity.

Let $\delta\to0$. By the mapping degree theory, we have
\begin{equation}\label{3.27}
  \begin{split}
    \lim_{\delta\to 0}\int_0^1dt\int_{t[X](\partial Z_\delta(X))}\varphi=\sum_{p\in Z(X)}{\rm ind}_p \int_0^1dt\int_{S_pM(t)}\varphi.
  \end{split}
\end{equation}
By (\ref{3.25}) (\ref{combinatorial}),  we obtain
\begin{equation}\label{stroke}
  \begin{split}
  &-\int_0^1dt\int_{S_pM(t)}\varphi
  =-\int_0^1\int_{S_pM(t)}dF\wedge\sum_{i=1}^{2n}(-1)^{i-1}\frac{y^i}{F(y)}\left(dy^1\wedge\cdots \wedge\widehat{dy^{i}}\wedge\cdots\wedge dy^{2n}\right)\\
  &\qquad\cdot\frac{(-\pi)^{n}}{2^{n}}\sum_{\beta_{n}}\left[\prod_{i=1}^{2n}\frac{1+(-1)^{\beta_{n}(i)}}{2}(\beta_{n}(i)-1)!!\right] \epsilon_{c_1\ldots c_{2n}}\Theta_{b_1c_1}^1\cdots \Theta_{b_{2n}c_{2n}}^{2n}\sqrt{\det((\tilde{g}_F)_{ij})}\\
  =&\int_{D_pM(1)}\frac{(-\pi)^{n}}{2^{n}}\sum_{\beta_{n}}\left[\prod_{i=1}^{2n}\frac{1+(-1)^{\beta_{n}(i)}}{2}(\beta_{n}(i)-1)!!\right] \epsilon_{c_1\ldots c_{2n}}\Theta_{b_1c_1}^1\cdots \Theta_{b_{2n}c_{2n}}^{2n}\\
  &\qquad\sqrt{\det((\tilde{g}_F)_{ij})}dy^1\wedge\cdots\wedge dy^{2n}\\
  =&\int_{D_pM(1)}(-2\pi)^{n}\frac{1}{2^{2n}}\sum_{\beta_{n}}\left[\prod_{i=1}^{2n}\frac{1+(-1)^{\beta_{n}(i)}}{2}(\beta_{n}(i)-1)!!\right] \Theta_{b_1}^1\wedge\cdots \wedge\Theta_{b_{2n}}^{2n}\\
  =&(-2\pi)^{n}\int_{D_pM(1)}{\rm Pf}(Q),
  \end{split}
\end{equation}
where we set $Q:=-\frac{1}{4}\Theta\wedge\Theta$ as usual.

For any $p\in Z(X)$, the tangent space $T_pM$ is a flat manifold with the flat connection $d$ and the Riemannian metric $\tilde{\bar{g}}_F:=(\tilde{g}_F)_{ij}dy^i\otimes dy^j$. Let $\hat{d}$ be the symmetrization of $d$. According to Proposition 4.3 in \cite{BZ}, the curvature of  $\hat{d}$ is just $Q$. Denote the curvature of the Levi-Civita connection of $\tilde{\bar{g}}_F$ by $R^{T_pM}$. By Proposition 3.6 in \cite{Zhang}, one has
\begin{equation}\label{transgression lc and flat connection}
  {\rm Pf}(Q)={\rm Pf}(R^{T_pM})+d\psi,
\end{equation}
For some differential form $\psi$. Furthermore, on the set $D_pM(1)\setminus D_pM(1/2)$, $\tilde{\bar{g}}_F$ is a Hessian metric, i.e.
 $$\tilde{\bar{g}}_F=\bar{g}_F=\frac{1}{2}[F]^2_{y^iy^j}dy^i\otimes dy^j.$$
In this case, the curvature form of the Levi-Civita connection of the Hessian metric is just $Q$ (cf. \cite{Shi}). Therefore $d\psi=0$ holds on $D_pM(1)\setminus D_pM(1/2)$.

By (\ref{stroke}), (\ref{transgression lc and flat connection}) and Stokes' theorem, we have
\begin{align}\label{nice}
 -\int_0^1dt\int_{S_pM(t)}\varphi=(-2\pi)^{n}\int_{D_pM(1)}[{\rm Pf}(R^{T_pM})+d\psi]=(-2\pi)^{n}\int_{D_pM(1)}{\rm Pf}(R^{T_pM}).
\end{align}

Following Chern \cite{Chern1}, we introduce some differential forms on $SM$,
\begin{equation*}
\bar{\Phi}_k=\sum\epsilon_{a_1\ldots a_{2n-1}}{Q}_{a_1}^{a_2}\wedge\cdots\wedge{Q}_{a_{2k-1}}^{a_{2k}}\wedge\omega^{2n}_{a_{2k+1}}\wedge\cdots\wedge\omega^{2n}_{a_{2n-1}},\quad k=0,\ldots, n-1.
\end{equation*}

\begin{equation}\label{H}
\bar{\Pi}=\left(\frac{1}{2\pi}\right)^n\sum_{k=0}^{n-1}\frac{(-1)^k}{(2n-2k-1)!!2^k k!}\bar{\Phi}_k,
\quad
\mathcal{H}=\sum_{k=1}^{n-1}\frac{(-1)^{n+k}}{(2n-2k-1)!!2^k k!}\bar{\Phi}_k.
\end{equation}
Using the GBC-formula for Riemannian manifolds with boundary (cf. \cite{Chern1.1}) and (\ref{H}), we obtain
\begin{equation}\label{GBC Boundary}
\begin{split}
  &\left(\frac{-1}{2\pi}\right)^{n}\int_{D_pM(1)}{\rm Pf}(R^{T_pM})=1-\int_{S_pM}\bar{\Pi}\\
  =&1-\left(\frac{1}{2\pi}\right)^n\int_{S_pM}\frac{(2n-1)!}{(2n-1)!!}\omega^{2n}_{1}\wedge\cdots\wedge\omega^{2n}_{2n-1}-\left(\frac{-1}{2\pi}\right)^n\int_{S_pM}\mathcal{H}\\
  =&1-\frac{{\rm Vol}(S_pM)}{{\rm Vol}(S^{2n-1})}-\left(\frac{-1}{2\pi}\right)^n\int_{S_pM}\mathcal{H}.
\end{split}
\end{equation}
Combining (\ref{3.26}), (\ref{3.27}), (\ref{nice}) and (\ref{GBC Boundary}), when $\delta\to0$ the last term in (\ref{transgression X}) is
\begin{equation} \label{transgression term worked}
\begin{split}
  &\lim_{\delta\to 0}\lim_{T\to\infty}\left(\frac{1}{2\pi}\right)^{2n}\int_{TM|_{\partial Z_\delta(X)}}\int_0^1
  \tr_s\left[Tc_{\tilde{g}_F}(\widehat{[X]})\exp\left(\widetilde{\nabla}_{\rho}^{\Lambda^*(\pi^*T^*M)}+Tc_{\tilde{g}_F}(\hat{Y}-t\widehat{[X]})\right)^2\right]dt\\
  =&-\sum_{p\in Z(X)}{\rm ind}_p \left(1-\frac{{\rm Vol}(S_pM)}{{\rm Vol}(S^{2n-1})}-\left(\frac{-1}{2\pi}\right)^n\int_{S_pM}\mathcal{H}\right).
\end{split}
\end{equation}
By (\ref{0}), (\ref{transgression X}), (\ref{361}), (\ref{transgression term worked}) and the Poincar\'{e}-Hopf theorem, we obtain
\begin{equation*}
  \begin{split}
    &\left(\frac{-1}{2\pi}\right)^{n}\int_M[X]^*{\rm Pf}(R^{\rm Car})
=\left(\frac{-1}{2\pi}\right)^{n}\lim_{\delta\to 0} \int_{M_\delta}[X]^*{\rm Pf}(R^{\rm Car})\\
=&\left(\frac{1}{2\pi}\right)^{2n}\lim_{\delta\to 0} \lim_{T\to\infty}\int_{TM_\delta}\tr_s\left[\exp\tilde{A}_{\rho,T,1}^2\right]
=\left(\frac{1}{2\pi}\right)^{2n}\lim_{\delta\to 0} \lim_{T\to\infty}\int_{TM_\delta}\tr_s\left[\exp\tilde{A}_{\rho,T,0}^2\right]\\
&+\left(\frac{1}{2\pi}\right)^{2n}\lim_{\delta\to 0}\lim_{T\to\infty}\int_{TM|_{\partial Z_\delta(X)}}\int_0^1
  \tr_s\left[Tc_{\tilde{g}_F}(\widehat{[X]})\exp\left(\widetilde{\nabla}_{\rho}^{\Lambda^*(\pi^*T^*M)}+Tc_{\tilde{g}_F}(\hat{Y}-t\widehat{[X]})\right)^2\right]dt\\
=&\left(\frac{1}{2\pi}\right)^{2n}\lim_{T\to\infty} \lim_{\delta\to 0}\int_{TM_\delta}\tr_s\left[\exp\tilde{A}_{\rho,T,0}^2\right]-\sum_{p\in Z(X)}{\rm ind}_p \left(1-\frac{{\rm Vol}(S_pM)}{{\rm Vol}(S^{2n-1})}-\left(\frac{-1}{2\pi}\right)^n\int_{S_pM}\mathcal{H}\right)\\
=&\chi(M)-\sum_{p\in Z(X)}{\rm ind}_p \left(1-\frac{{\rm Vol}(S_pM)}{{\rm Vol}(S^{2n-1})}-\left(\frac{-1}{2\pi}\right)^n\int_{S_pM}\mathcal{H}\right)\\
=&\sum_{p\in Z(X)}{\rm ind}_p \frac{{\rm Vol}(S_pM)}{{\rm Vol}(S^{2n-1})}+\left(\frac{-1}{2\pi}\right)^n\sum_{p\in Z(X)}{\rm ind}_p\int_{S_pM}\mathcal{H}.
  \end{split}
\end{equation*}
Similar to (\ref{[X]*pf Rcar}), we denote that
\begin{equation}\label{[X]*dH}
  \int_M[X]^*d\mathcal{H}:=\lim_{\delta\to 0}\int_{M_\delta}[X]^*d\mathcal{H}.
\end{equation}
Following the strategy in \cite{Chern1}, \cite{Chern1.1} and \cite{BaoChern}, by the mapping degree theorem, one has
\begin{equation}\label{dH}
  -\int_{M}[X]^*d\mathcal{H}=-\lim_{\delta\to 0}\int_{M_\delta}[X]^*d\mathcal{H}=\sum_{p\in Z(X)}{\rm ind}_p\int_{S_pM}\mathcal{H}.
\end{equation}
This complete the proof.
\end{proof}

By using Lemma \ref{lemma 3} and modifying the Finsler metric near the isolated zeros of a given vector field,
 we are able to give a proof of Theorem \ref{BC type}.

\begin{proof}[Proof of Theorem \ref{BC type}]
  Let $p\in Z(X)$ be any one of the zero points of $X$.
  We can find a local coordinate system $(U_p; x^1,\ldots,x^{2n})$ around $p$ with $x^i(p)=0$.
  For simplicity, we change the background Riemannian metric $g^{TM}$ such that
  \begin{equation*}
    g^{TM}|_{U_p}=(dx^1)^2+\cdots+(dx^1)^{2n}.
  \end{equation*}
  Now we define a Finsler metric $\tilde{F}$ on $TU_p$ as follows: for any $x\in U_p$ and $y=y^i\frac{\partial}{\partial x^i}|_x\in T_xM$,
  \begin{equation*}
    \tilde{F}(x,y^i\left.\frac{\partial}{\partial x^i}\right|_x):=F(p,y^i\left.\frac{\partial}{\partial x^i}\right|_p).
  \end{equation*}
   It is clear that $\tilde{F}$ is a locally Minkowski metric on $TU_p$.
  For sufficiently small positive number $\epsilon>0$,
  $$B_{p}(\epsilon)=\left\{x\in U_{p}|r(x):=\sqrt{\sum_{i=1}^{2n}(x^i)^2}<\epsilon\right\}$$
  denotes the $g^{TM}$ ball of radius $\epsilon$ enclosed in $U_{p}$. Set $Z_{\epsilon}(X)=\cup_{p\in Z(X)}B_p(\epsilon)$ and $M_\epsilon=M\setminus Z_{\epsilon}(X)$.

  Let $\phi(t)$ be any smooth cut off function with $0\leq\phi(t)\leq1$, $\phi(t)\equiv1$ for $t<0$ and $\phi(t)\equiv0$ for $t>1$.
  It is clear that $|\phi'|$ and $|\phi''|$ are bounded and ${\rm supp}(\phi^{(k)})=[0,1]$ for any $k=0,1,2,\ldots$.
  Set $C_0:=\max\{|\phi'|,|\phi''|\}$.

  For each $p\in Z(X)$, we define the following cut off function
  $$\phi_{p,\epsilon}(x)=\phi\left(\frac{r(x)-\epsilon/2}{\epsilon/2}\right)$$
  and the following modified metric
  \begin{equation*}
    F_{p,\epsilon}(x,y)=\sqrt{(1-\phi_{p,\epsilon}(x))F^2(x,y)+\phi_{p,\epsilon}(x)\tilde{F}^2(x,y)}
    =\sqrt{(1-\phi_{p,\epsilon}(x))F^2(x,y)+\phi_{p,\epsilon}(x)F^2(p,y)}.
  \end{equation*}
  Because
  \begin{equation*}
    g_{p,\epsilon,ij}(x,y):=\frac{1}{2}[F^2_{p,\epsilon}(x,y)]_{y_iy_j}=(1-\phi_{p,\epsilon}(x))g_{ij}(x,y)+\phi_{p,\epsilon}(x)g_{ij}(p,y)
  \end{equation*}
  is positive define, one can easily verify that $F_{p,\epsilon}$ is a well defined Finsler metric on $M$.

 Set $$F_{\epsilon}=\sqrt{\left[\prod_{p\in Z(X)}(1-\phi_{p,\epsilon}(x))\right]F^2(x,y)+\sum_{p\in Z(X)}\left[\phi_{p,\epsilon}(x)F^2(p,y)\right]}.$$
  One verifies that $F_{\epsilon}=F_{p,\epsilon}$ around $p\in Z(X)$.
  By the definition, $F_{\epsilon}\equiv F$ on $M_{\epsilon}$ while it is locally Minkowskian on $Z_{\epsilon/2}(X)=\cup_{p\in Z(X)}B_p(\epsilon/2)$.

We will use $R_{\epsilon}^{\rm Car}$ and $\mathcal{H}_{\epsilon}$  to denote the geometric invariants related to $F_{\epsilon}$. Obviously, one has $R_{\epsilon}^{\rm Car}=R^{\rm Car}$ on $TM_{\epsilon}$. On the other hand, by the construction of $F_{\epsilon}$ and definition (\ref{H}), one verifies directly that $\mathcal{H}_{\epsilon}=\mathcal{H}$ along $S_pM$ for any $p\in Z(X)$.

Applying Lemma \ref{lemma 3} to $F_{\epsilon}$, for any $0<\delta<\epsilon/2$, we get
  \begin{equation}\label{l2}
  \begin{split}
    &\sum_{p\in Z(X)}{\rm ind}_p \frac{{\rm Vol}(S_pM)}{{\rm Vol}(S^{2n-1})}+\left(\frac{-1}{2\pi}\right)^n\sum_{p\in Z(X)}{\rm ind}_p\int_{S_pM}\mathcal{H}\\
    =&\sum_{p\in Z(X)}{\rm ind}_p \frac{{\rm Vol}(S_pM)}{{\rm Vol}(S^{2n-1})}+\left(\frac{-1}{2\pi}\right)^n\sum_{p\in Z(X)}{\rm ind}_p\int_{S_pM}\mathcal{H}_{\epsilon}\\
    =&\left(\frac{-1}{2\pi}\right)^{n}\lim_{\delta\rightarrow0}\int_{M_\delta}[X]^* {\rm Pf}(R_{\epsilon}^{\rm Car})
            =\left(\frac{-1}{2\pi}\right)^{n}\int_{M}[X]^*{\rm Pf}(R_{\epsilon}^{\rm Car}) \\
    =&\left(\frac{-1}{2\pi}\right)^{n}\int_{M_{\epsilon}}[X]^* {\rm Pf}(R^{\rm Car})
           +\left(\frac{-1}{2\pi}\right)^{n}\int_{Z_{\epsilon}(X)}[X]^*{\rm Pf}(R_{\epsilon}^{\rm Car}).
  \end{split}
  \end{equation}
  We claim that
  \begin{equation}\label{l1}
    \lim_{\epsilon\rightarrow0}\int_{Z_{\epsilon}(X)}[X]^*{\rm Pf}(R_{\epsilon}^{\rm Car})=0.
  \end{equation}
  The proof of this claim will be presented in the appendix.

  Combining (\ref{dH}), (\ref{l2}) and (\ref{l1}), we have
  \begin{align*}
    &\sum_{p\in Z(X)}{\rm ind}_p \frac{{\rm Vol}(S_pM)}{{\rm Vol}(S^{2n-1})}-\left(\frac{-1}{2\pi}\right)^n\int_M[X]^*d\mathcal{H}\\
    =&\sum_{p\in Z(X)}{\rm ind}_p \frac{{\rm Vol}(S_pM)}{{\rm Vol}(S^{2n-1})}+\left(\frac{-1}{2\pi}\right)^n\sum_{p\in Z(X)}{\rm ind}_p\int_{S_pM}\mathcal{H}\\
    =&\left(\frac{-1}{2\pi}\right)^{n}\lim_{\epsilon\to0}\int_{M_{\epsilon}}[X]^* {\rm Pf}(R^{\rm Car})
           +\left(\frac{-1}{2\pi}\right)^{n}\lim_{\epsilon\to0}\int_{Z_{\epsilon}(X)}[X]^*{\rm Pf}(R_{\epsilon}^{\rm Car})\\
    =&\left(\frac{-1}{2\pi}\right)^{n}\lim_{\epsilon\to0}\int_{M_{\epsilon}}[X]^* {\rm Pf}(R^{\rm Car})=\left(\frac{-1}{2\pi}\right)^{n} \int_{M}[X]^* {\rm Pf}(R^{\rm Car})
  \end{align*}
  Hence the proof is completed by the assumption on the volumes of the Finsler unit spheres.
\end{proof}

\begin{remark}
  It is well known that a Finsler manifold is locally Minkowskian if and only if the Chern curvature $R^{\rm Ch}=0$. Thus Bao-Chern's GBC-formula (\ref{BaoChern GBC}) implies immediately that the Euler characteristic $\chi(M)=0$ for locally Minkowski spaces $M$, whereas it is hard to get this result directly form the Lichnerowicz GBC-formula (\ref{L type formula}). On the other hand, our Theorem \ref{main thm} also implies directly the same vanishing result for locally Minkowski spaces.
\end{remark}

\section*{Appendix}
In this appendix, we will give a proof of the claim (\ref{l1}). We would like to present the following estimations on $Z_{\epsilon}$.
  We only need to deal with one of $p\in Z(X)$, because $Z(X)$ is discrete and finite.
  First, we have
  \begin{equation*}
    \frac{\partial r}{\partial x^k}=\frac{x^k}{r}, \quad \frac{\partial^2r}{\partial x^k\partial x^l}=\frac{1}{r}\left(\delta^{kl}-\frac{x^k}{r}\frac{x^l}{r}\right).
  \end{equation*}
  Then we have the estimations for the first derivatives of $\phi_{p,\epsilon}$,
  \begin{equation*}
    \left|\frac{\partial \phi_{p,\epsilon}}{\partial x^k}(x)\right|=\frac{2}{\epsilon}\left|\phi'\left(\frac{r(x)-\epsilon/2}{\epsilon/2}\right)\frac{\partial r}{\partial x^k}\right|\leq 2C_0\frac{1}{\epsilon}.
  \end{equation*}
  For the second derivatives of $\phi_{p,\epsilon}$, noticing that the support of the derivatives of $\phi_{p,\epsilon}$ is just $\overline{B_{p}(\epsilon)}\setminus B_{p}(\epsilon/2)$, we have
  \begin{align*}
    \left|\frac{\partial^2\phi_{p,\epsilon}}{\partial x^k\partial x^l}(x)\right|=\frac{4}{\epsilon^2}\left|\phi''\left(\frac{r(x)-\epsilon/2}{\epsilon/2}\right)\frac{\partial r}{\partial x^k}\frac{\partial r}{\partial x^l}
    +\phi'\left(\frac{r(x)-\epsilon/2}{\epsilon/2}\right)\frac{\epsilon}{2}\frac{\partial^2 r}{\partial x^k\partial x^l}\right|
    \leq 12C_0\frac{1}{\epsilon^2}.
  \end{align*}
  As $\epsilon\rightarrow 0$, the first and the second derivatives of $g_{p,\epsilon,ij}(x,y)$ with respect to $x^i$ satisfies
  \begin{align*}
    \frac{\partial g_{p,\epsilon,ij}}{\partial x^k}(x,y)&=(1-\phi_{p,\epsilon}(x))\frac{\partial g_{ij}}{\partial x^k}(x,y)
    +\frac{\partial \phi_{p,\epsilon}}{\partial x^k}(x)(g_{ij}(p,y)-g_{ij}(x,y))\\
    &=(1-\phi_{p,\epsilon}(x))\frac{\partial g_{ij}}{\partial x^k}(x,y)
    +\frac{\partial \phi_{p,\epsilon}}{\partial x^k}(x)\frac{\partial g_{ij}}{\partial x^t}(p+\theta\cdot(x-p),y)x^t=O(1)
  \end{align*}
  and
  \begin{align*}
    \frac{\partial^2 g_{p,\epsilon,ij}}{\partial x^k\partial x^l}(x,y)
    =&(1-\phi_{p,\epsilon}(x))\frac{\partial^2 g_{ij}}{\partial x^k\partial x^l}(x,y)
    -\frac{\partial \phi_{p,\epsilon}}{\partial x^k}(x)\frac{\partial g_{ij}}{\partial x^l}(x,y)-\frac{\partial \phi_{p,\epsilon}}{\partial x^l}(x)\frac{\partial g_{ij}}{\partial x^k}(x,y)\\
    &+\frac{\partial^2\phi_{p,\epsilon}}{\partial x^k\partial x^l}(x)(g_{ij}(p,y)-g_{ij}(x,y))\\
    =&(1-\phi_{p,\epsilon}(x))\frac{\partial^2 g_{ij}}{\partial x^k\partial x^l}(x,y)
    -\frac{\partial \phi_{p,\epsilon}}{\partial x^k}(x)\frac{\partial g_{ij}}{\partial x^l}(x,y)-\frac{\partial \phi_{p,\epsilon}}{\partial x^l}(x)\frac{\partial g_{ij}}{\partial x^k}(x,y)\\
    &+\frac{\partial^2\phi_{p,\epsilon}}{\partial x^k\partial x^l}(x)\frac{\partial g_{ij}}{\partial x^t}(p+\theta\cdot(x-p),y)x^t=O\left(\frac{1}{\epsilon}\right),
    \end{align*}
  where $\theta\in [0,1]$. On the other hand, as $\epsilon\rightarrow0$, we have
  \begin{equation*}
    F_{p,\epsilon}\frac{\partial g_{p,\epsilon,ij}}{\partial y^k}(x,y)=O(1),\quad F_{p,\epsilon}\frac{\partial^2 g_{p,\epsilon,ij}}{\partial x^k \partial y^l}(x,y)=O(1), \quad F^2_{p,\epsilon}\frac{\partial^2 g_{p,\epsilon,ij}}{\partial y^k\partial y^l}(x,y)=O(1).
  \end{equation*}
 By (\ref{Gamma}), (\ref{gamma}), (\ref{N}) and (\ref{l}), as $\epsilon\rightarrow0$, we have
 \begin{equation*}
   (\Gamma_{p,\epsilon})_{jk}^{i}=O(1),\quad \frac{\partial}{\partial x^l}(\Gamma_{p,\epsilon})_{jk}^{i}=O\left(\frac{1}{\epsilon}\right),\quad  (P_{p,\epsilon})_{j~kl}^{~i}=F_{p,\epsilon}\frac{\partial}{\partial y^l}(\Gamma_{p,\epsilon})_{jk}^{i}=O(1).
 \end{equation*}
 By (\ref{l}), as $\epsilon\rightarrow0$, we have
 \begin{align*}
   (R_{p,\epsilon})_{j~kl}^{~i}&=\frac{\delta (\Gamma_{p,\epsilon})_{jk}^{i}}{\delta x^l}-\frac{\delta (\Gamma_{p,\epsilon})_{jl}^{i}}{\delta x^k}+O(1)
   =\frac{\partial (\Gamma_{p,\epsilon})_{jk}^{i}}{\partial x^k}-\frac{\partial (\Gamma_{p,\epsilon})_{jl}^{i}}{\partial x^k}+O(1)
   =O\left(\frac{1}{\epsilon}\right).
 \end{align*}
 Now we would like to estimate $\nabla^{\rm Ch}\widehat{[X]}$ as $\epsilon\rightarrow 0$.
 By definition, $$\widehat{[X]}(x,y)=[X]^i\left.\frac{\partial}{\partial \hat{x}^i}\right|_{(x,y)}=\frac{X^i(x)}{F_{p,\epsilon}(x,X)}\left.\frac{\partial}{\partial \hat{x}^i}\right|_{(x,y)}$$
 and
\begin{equation*}
  \nabla^{\rm Ch}\widehat{[X]}=\left(d^{M}[X]^i+[X]^j(\Gamma_{p,\epsilon})_{jk}^idx^k\right)\frac{\partial}{\partial \hat{x}^i}.
\end{equation*}

Assume that the vector filed $X$ has the following Taylor expansion near $p\in Z(X)$
  \begin{equation}\label{transversal}
  X(x)=X^i(x)\left.\frac{\partial}{\partial x^i}\right|_x=\sum_{|\alpha|=s}a_{\alpha}^ix^{\alpha}\left.\frac{\partial}{\partial x^i}\right|_x+o(r^s), \quad\quad\forall x\in U_p,
  \end{equation}
where the sum is taken over all multiindices $\alpha=(\alpha_1,\ldots,\alpha_{2n})$ with $|\alpha|=\alpha_1+\cdots+\alpha_{2n}=s$, and $x^\alpha:=(x^1)^{\alpha_1}\cdots (x^{2n})^{\alpha_{2n}}$.
When $\epsilon\to 0$, one easily has
$$X^i=O(\epsilon^s), \quad \quad \frac{\partial X^i}{\partial x^j}=O(\epsilon^{s-1}),\quad {\rm for} \quad i,j=1,\ldots,2n.$$

Set
$$\xi_{\epsilon}(x):=F_{p,\epsilon}(x,X)=\sqrt{g_{p,\epsilon,ij}(x,X)X^iX^j}.$$
Because $a_{\alpha}^i$ are constants and $g_{p,\epsilon}$ is uniformly bounded and positive definite on $B_{p}(\epsilon)$, we have
\begin{equation*}
  \xi_{\epsilon}(x)=\sqrt{g_{p,\epsilon,ij}(x,X)\sum_{|\alpha|=|\beta|=s}a_{\alpha}^ia_{\beta}^jx^\alpha x^\beta}+o(r^s) \geq C_1 r^s(x)
\end{equation*}
for some constant $C_1>0$. Thus as $\epsilon\to0$, we obtain
$$[X]^i=\frac{X^i}{\xi_{\epsilon}}=\frac{\sum_{|\alpha|=s}a_{\alpha}^ix^{\alpha}+o(r^s)}{\xi_{\epsilon}}=O(1),\quad {\rm for} \quad i=1,\ldots,2n.$$
Furthermore, by the Euler lemma for homogeneous functions, as $\epsilon\to0$ we have
\begin{align*}
  \frac{\partial \xi_{\epsilon}}{\partial x^k}=\frac{1}{2\xi_{\epsilon}}\frac{\partial \xi_{\epsilon}^2}{\partial x^k}
  =&\frac{1}{2\xi_{\epsilon}}\left[\frac{\partial g_{p,\epsilon,ij}}{\partial x^k}(x,X)X^iX^j
  +\frac{\partial g_{p,\epsilon,ij}}{\partial y^s}(x,X)\frac{\partial X^s}{\partial x^k}X^iX^j+g_{p,\epsilon,ij}(x,X)\frac{\partial}{\partial x^k}(X^iX^j)\right]\\
  =&\frac{1}{2\xi_{\epsilon}}\left[\frac{\partial g_{p,\epsilon,ij}}{\partial x^k}(x,X)X^iX^j
  +g_{p,\epsilon,ij}(x,X)\frac{\partial}{\partial x^k}(X^iX^j)\right]\\
  =&g_{p,\epsilon,ij}(x,X)\frac{\partial X^i }{\partial x^k}[X]^j+O(1)=O (\epsilon^{s-1} ).
\end{align*}
Hence,
\begin{align*}
  [X]^*\left(\nabla^{\rm Ch}\widehat{[X]}\right)=&\frac{\partial [X]^i}{\partial x^k}\frac{\partial}{\partial x^i}+O(1)
  =\left(\frac{\partial }{\partial x^k}\frac{X^i}{\xi_{\epsilon}}\right)\frac{\partial}{\partial x^i}+O(1)\\
  =&\frac{1}{\xi_{\epsilon}}\left[\frac{\partial X^i}{\partial x^k}-[X]^i\frac{\partial \xi_{\epsilon}}{\partial x^k}\right]\frac{\partial}{\partial x^i}+O(1)=O\left(\frac{1}{\epsilon}\right), \quad {\rm as}~\epsilon\rightarrow0.
\end{align*}

When $\epsilon\rightarrow 0$, we get
\begin{equation*}
 [X]^*\Theta_{p,\epsilon}=\left( g_{p,\epsilon}^{ik}(x,[X])(A_{p,\epsilon})_{ kjl}(x,[X])[X]^*\frac{(\nabla^{\rm Ch}\widehat{[X]})^l}{F_{p,\epsilon}}\right)=O\left(\frac{1}{\epsilon}\right),
\end{equation*}
and
\begin{equation*}
  [X]^*R^{\rm Ch}_{p,\epsilon}=\left(\frac{1}{2}(R_{p,\epsilon})_{j~kl}^{~i}(x,[X])dx^k\wedge dx^l+(P_{p,\epsilon})_{j~kl}^{~i}(x,[X])dx^k\wedge [X]^*(\nabla^{\rm Ch}\widehat{[X]})^l\right)=O\left(\frac{1}{\epsilon}\right).
\end{equation*}
By (\ref{B3}), as $\epsilon\rightarrow0$, we obtain
 \begin{equation*}
   [X]^*{\rm Pf}(R_{p,\epsilon}^{\rm Car})=[X]^*{\rm Pf}(\widehat{R}_{p,\epsilon}^{\rm Ch}+Q_{p,\epsilon})=O\left(\frac{1}{\epsilon^{2n-1}}\right).
 \end{equation*}
 But the volume of $B_{p}(\epsilon)$ is
 \begin{equation*}
   {\rm Vol}(B_{p}(\epsilon)=O(\epsilon^{2n}),
 \end{equation*}
 then we get
 \begin{equation*}
 \begin{split}
   \left|\int_{Z_{\epsilon}(X)}[X]^*{\rm Pf}(R_{\epsilon}^{\rm Car})\right|
      =\left|\sum_{p\in Z(X)}\int_{B_{p}(\epsilon)}[X]^*{\rm Pf}(R_{p,\epsilon}^{\rm Car})\right|\leq C\epsilon\rightarrow 0, \quad {\rm as}~\epsilon\rightarrow0,
 \end{split}
 \end{equation*}
 where $C$ is a constant.
 So the claim is valid.


\begin{thebibliography}{99}

\bibitem{BaoChern} D. Bao and S. S. Chern, \textsl{A note on Gauss-Bonnet theorem for Finsler spaces.} Ann. of Math., 143(1996), 233-252.


\bibitem{BaoChernShen}D. Bao, S. S. Chern  and Z. Shen, \textsl{An Introduction to Riemann-Finsler Geometry}. Graduate Texts in Mathematics, Vol. 200, Springer-Verlag, New York, Inc., 2000.

\bibitem{BaoShen} D. Bao and Z. Shen, \textsl{On the volume of unit tangent spheres in a Finsler manifold.} Results in Mathematics, 26(1994), 1-17.

\bibitem{BGV} N. Berline, E. Getzler and M. Vergne, \textsl{Heat Kernels and Dirac Operators.} Springer-Verlag, Berlin, Heidelberg, 2004.

\bibitem{BZ} J.-M. Bismut and W. Zhang, \textsl{An extension of a theorem by Cheeger and M\"{u}ller. With an appendix by Fran\c{c}ois Laudenbach.} Ast\'{e}risque No. 205 (1992), 235 pp.

\bibitem{Chern1} S. S. Chern, \textsl{A simple intrinsic proof of the Gauss-Bonnet formula for closed Riemannian manifolds.}
Ann. Math., 45(1944), 747-752.

\bibitem{Chern1.1} S. S. Chern, \textsl{On the curvature integra in a Riemannian manifold.}
Ann. Math., 46(1945), 674-684.

\bibitem{Chern2} S. S. Chern, \textsl{Local equivalence and Euclidean connections in Finsler spaces.}
 Sci. Rep. Nat. Tsing Hua Univ. Ser. A 5 (1948), 95-121; or Selected Papers, vol. II, 194-212, Springer-Verlag, New York, 1989.

\bibitem{Chern3} S. S. Chern, \textsl{Historical remarks on Gauss-Bonnet.} Analysis et Cetera, volume dedicated to J\"{u}rgen Moser, 209-217, Academic Press, 1990.

\bibitem{FL} H. Feng and M. Li, \textsl{Adiabatic limit and connections in Finsler geometry.}
Communications in Analysis and Geometry, 21(3)(2013), 607-624.


\bibitem{FZ}H. Feng  and W. Zhang, \textsl{Flat vector bundles and open coverings.
}  arXiv:1603.07248v3.

\bibitem{La} B. Lackey, \textsl{On the Gauss-Bonnet formula in Riemann-Finsler geometry.} Bull. London Math. Soc., 34(2002), 329-340.

\bibitem{Lich} A. Lichnerowicz, \textsl{Quelques th\'{e}or\`{e}mes de g\'{e}om\'{e}trie diff\'{e}rentielle globale.}
Comm. Math. Helv., 22(1949), 271-301.

\bibitem{MQ} V. Mathai and D. Quillen, \textsl{Superconnections, Thom classes and equivariant differential forms.}
Topology, 25(1986), 85-110.


\bibitem{Q} D. Quillen, \textsl{Superconnections and the Chern character.} Topology, 24(1985), 89-65.


\bibitem{ShenY} Y. Shen, \textsl{Some Problems in Riemann-Finsler Geometry} (in Chinese). SCIENCE CHINA: Mathematics, 49(5)(2015), 1-8.

\bibitem{Shen1} Z. Shen, \textsl{Some formulas of Gauss-Bonnet-Chern type in Riemann-Finsler geometry.}
Journal f\"{u}r die reine und angewandte Mathematik, 475(1996), 149-165.

\bibitem{Shen2} Z. Shen, \textsl{A Gauss-Bonnet-Chern formula for Finsler manifolds.} preprint, 1996.

\bibitem{Shi} H. Shima, \textsl{The Geometry of Hessian Structure.} World Scientific Publishing Co. Pte. Ltd., 2007.


\bibitem{Yu} Y. Yu, \textsl{Index Theorem and Heat Equation Method.}  World Scientific Publishing Co. Pte. Ltd., 2001.


\bibitem{Zhao} W. Zhao, \textsl{A simple proof of the Gauss-Bonnet-Chern formula for Finsler manifolds.}
arXiv:1303.1918v3, 2013.

\bibitem{Zhang}W. Zhang, \textsl{Lectures on Chern-Weil Theory and Witten Deformations}. Nankai Tracts in Mathematics, Vol. 4, World Scientific Publishing Co. Pte. Ltd., 2001.


\end{thebibliography}
\end{document}